\def\r{\mathbb R}
\def\n{\mathbf n}
\def\N{\mathbf N}
\def\F{\mathbf F}
\def\rr{\mathbf r}
\newtheorem{theorem}{Theorem}[section]
\newtheorem{corollary}[theorem]{Corollary}
\newtheorem{proposition}[theorem]{Proposition}
\newtheorem{definition}[theorem]{Definition}
\newtheorem{remark}[theorem]{Remark}
\title{Constant-speed ramps for a central force field}
 \author{Rafael L\'opez\footnote{Partially supported by the grant no. MTM2017-89677-P, MINECO/AEI/FEDER, UE}\\
 Departamento de Geometr\'{\i}a y Topolog\'{\i}a\\ Instituto de Matem\'aticas (IEMath-GR)\\
 Universidad de Granada\\
 18071 Granada, Spain\\
\texttt{rcamino@ugr.es}
\and
\'Oscar Perdomo\\
Department of Mathematics\\
 Central Connecticut State University\\
New Britain, CT 06050, USA\\
\texttt{perdomoosm@ccsu.edu}}
\date{}
\begin{document}\maketitle

\begin{abstract} 

We investigate the problem of determining the planar curves that describe ramps where a particle of mass $m$  moves with constant-speed when is subject to the action of the friction force and a force whose magnitude $F(r)$ depends only on the distance $r$ from the origin.    In this paper we  describe all the constant-speed ramps for the  case  $F(r)=-m/r$. We show the circles and the logarithmic spirals play an important role. No only they are solutions but every other solution approaches either a circle or a logarithmic spiral. 
\end{abstract}

 \noindent {\it Keywords:} ramp, central force field, friction force, TreadmillSled \\
{\it AMS Subject Classification:} 70E18, 53A17

%%%%%%%%%%%%%%%%%%%%%%%%%%%%
\section{Formulation of the problem} 
%%%%%%%%%%%%%%%%%%%%%%%%

It is known that when an object  is placed on an inclined plane with small angle $\theta$ with the horizontal, the friction forces are responsible that the object stays still on the plane.  
 As we tilt the plane and  due to the gravity, there is a critical angle $\theta_0$, the so-called  angle of repose, such that  the object starts to slide down the ramp. If, in addition,  we tilt more the ramp,  the object accelerates down the inclined plane. 
  The force of gravity acting on the object is separated in two components. While the normal component to the plane is balanced with the normal force that exerts the plane, 
  the   component  parallel to the plane is bigger than the friction force that opposes the motion of the object. These unbalanced forces make that the object slides down with constant acceleration. At the critical slope $\theta_0$ of the plane,   the net force on the object is zero and the object slides down on the inclined plane with constant-speed $v$. This allows to measure the coefficient of kinetic friction $\mu$ between the object and the floor of the ramp, deducing that $\tan\theta_0=\mu$, in particular, $\mu$ is independent of $v$. The  history behind the deduction of the angle of repose is long and goes back to works by, among others,   Stevin,    Galilei,   da Vinci and    Euler: see \cite[Ch. V]{us} for a historical introduction.

Recently, the second author considered the   problem of determining the   non-rectilinear curve contained in a vertical plane that describes an object that moves down with constant-speed $v$  by the effect of the gravity and the  friction forces (\cite{pe}). Besides the tilted straight line of angle $\theta_0$ described in the above paragraph,  it was proven that constant speed ramps can also be built using a rotation of the trajectory   $\alpha=\alpha(t)$   parametrized by    
$$\alpha(t)=\left(t+\frac{1}{a}\log(1+e^{-2at}),\frac{2}{a} \mathrm{arccot}(e^{-at})\right),$$
where $a$ is a constant depending on $\mu$,  $v$ and the acceleration due to gravity. In general, given a set of forces and a coefficient of
friction $\mu$, the problem of constant-speed ramp consists of finding a curve that describes the boundary of ramp where a particle will move with constant speed.

Before we continue, we precise the definition of a ramp because we have to say which is the side about $\alpha$ where the object is  supported. This will be indicated when we fix a normal vector to $\alpha$. By convention, the unit normal vector  $\mathbf{n}$  to a regular curve $\alpha$ is defined by rotating the unit tangent vector $\alpha'(t)/|\alpha'(t)|$ counterclockwise through an angle $\pi/2$, 
$$\mathbf{n}(t)= J\left(\frac{\alpha'(t)}{|\alpha'(t)|}\right), \quad  J(x,y)=(-y,x).$$
  Locally, the trace of $\alpha$ separates the plane $\r^2$ in two components whose common boundary is $\alpha$ and only one of them has the vector $\mathbf{n}$ as the outer normal vector. In other words, given  $t_0$ in the domain of $\alpha$, we require that the   set  
\begin{equation}\label{ss}
\Sigma_{\alpha}(t_0)=\{\alpha(t)-u  \mathbf{n}(t) : t\in (t_0-\delta,t_0+\delta), 0<u<\epsilon\}
\end{equation}
for $\delta,\epsilon>0$ sufficiently small, is part of the ramp. Formally, the definition of a ramp is the following.

\begin{definition} \label{d-1}
A {\it ramp} is an ordered pair $(\alpha,\mathbf{n})$, where   $\alpha:[a,b]\rightarrow\r^2$ is a regular curve and $\mathbf{n}$ is its unit normal vector field. The ramp  $(\alpha,\mathbf{n})$  will be viewed locally as the domain $\Sigma_{\alpha}(t_0)$   described in \eqref{ss}.    
 \end{definition}

In this paper we study the constant-speed ramp problem under the effect of a central force field. A {\it central force} is defined  as   a force   that points from the particle directly towards, or away,  from  a fixed point in the plane called the center, and whose magnitude depends only on  the distance of the object to the center (\cite{go}). We precise the formulation of our problem.

 Consider the motion of a particle $M$ of  constant mass $m$ in  the plane $xOy$ of the Cartesian rectangular coordinate system
along a smooth curve  $\alpha=\alpha(t)$.   From now on, we use symbols in a
bold font to denote vectors and we use the same non-bold symbols  to represent
their magnitudes. We assume that the forces exerted on $M$ are the following: see Figure \ref{fig1}.

\begin{itemize}
\item A {\it central force} $\mathbf{F}$ that  acts on the particle $M$ directed  towards, or away,  from a fixed point in plane, which we assume to be   the origin $O$ of the coordinate system. The magnitude of the central force will depend only on the   the distance $r$ between $O$ and the particle $M$. We also assume that $\F\not=0$ everywhere. If $\mathbf{r}$ is the vector position of $M$,  the central force writes as 
$\mathbf{F}(\mathbf{r})=F(r)\hat{\rr}$  where $\hat{\rr}$ is the unit vector in the direction of $\mathbf{r}$. Here $F(r)$ is positive (repulsion) or negative (attraction).
\item A {\it normal force} $\mathbf{N}$ that is  exerted on  $M$ by the ramp $\alpha$    orthogonal to the movement of $M$.  Thus $N$ can be expressed as 
$$\mathbf{N}(t)=\lambda(t) \mathbf{n}(t),$$
where $\lambda>0$.
\item A friction force $\mathbf{F}_f$  in the opposite direction of the movement. The force $\mathbf{F}_f$ can be expressed as 
$$\mathbf{F}_f=-\mu\lambda(t)\alpha'(t),$$
 where $\mu>0$ is a constant called the kinetic coefficient of friction.  
 \end{itemize}

Under this system of forces, the purpose of this paper is the following.

\begin{quote}{\bf Problem:}  Determine the  ramps $\alpha=\alpha(t)$  that induce a motion of $M$ with {\it constant speed} under the effect of a central force field and the friction force. 
\end{quote}

\begin{figure}[hbtp]
\centering \scalebox{0.8}{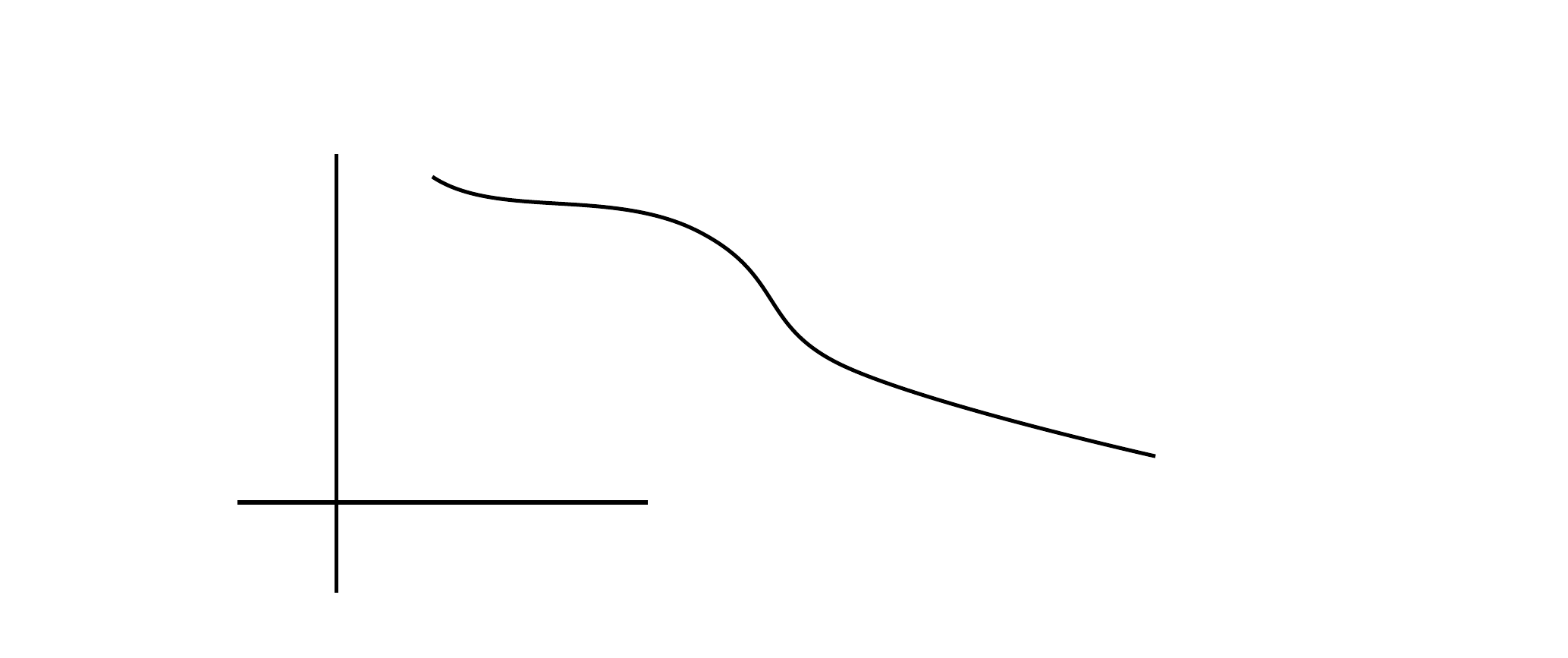} 
\caption{A mass $M$ sliding along $\alpha$ under the effect of a central force $\F(\rr)$ and the friction force. }\label{fig1}
\end{figure}

An example of central force is when the force is directly/inversely proportional to the $n$th power of the distance $r$. In this paper, we call the {\it $n$th power central force} to the force defined by 
\begin{equation}\label{pcf}
\F(\rr)=\varepsilon m r^n \hat{\rr},
\end{equation}
where $n\in\r$ and $\varepsilon\in\{-1,1\}$. The value $\varepsilon=1$ (resp. $\varepsilon=-1$) indicates an attraction (resp. repulsion) force.  

In Section \ref{sec2} we obtain the kinematic equation of the motion of $M$ for a general central force field $\F(\rr)$ when we impose the restriction that the speed of $M$ is constant. We point out that this equation is difficult to solve in all its generality, even in the case of example \eqref{pcf}. In that section we also  study   what types of central force fields have   constant-speed ramps with simple geometry, as for example, straight-lines or circles. 

Due to the difficulty of the general problem, in this paper we   focus in the particular case  
\begin{equation}\label{pcf1}
\F(\rr)=-m\frac{\hat{\rr}}{r}.
\end{equation}

We will call this force an {\it inverse central harmonic oscillator} due to the fact that the force for the regular harmonic oscillator is given 
by   $\F(\mathbf{r})=-r \hat{\rr}$.   In Sections \ref{sec3} and \ref{sec4}, we will investigate the geometric description of the constant-speed ramps for the choice \eqref{pcf1}. Among the results, we point out the following observations.  
\begin{enumerate}
\item[(a)] In contrast with the case of the inclined plane,   the shapes of our ramps depend on the velocity $v$.
\item[(b)] If $v=1$, there are circles as examples of constant-speed ramps for every value of the radius. On the other hand, if $v\not=1$, there do not exist circular constant-speed ramps.
\item[(c)] If  $v=1$, all  constant-speed ramps are  circles or curves bounded between two concentric circles and having these circles as limit points. (Theorem \ref{tv1}).
\item[(d)] If $v\not=1$, there are two constant-speed ramps $\alpha_{ls}$ that are logarithmic spirals. Besides these examples, the other ramps converge to $\alpha_{ls}$ (Corollary \ref{c-43}). If $v>1$ the ramps are unbounded whereas if $v<1$, they are bounded. 
\item[(e)] The only periodic constant-speed ramps are circles centered at the origin.
 \end{enumerate}
 
 For the description of the constant-speed ramps, we will use the notion of     TreadmillSled introduce by the second author in \cite{pe1}. The TreadmillSled operator associates to each plane curve $\alpha$ other plane curve $\gamma$ constructed as follows. Let us take two copies of the Euclidean plane, one of them, say $\r^2$, is fixed and the other one, denoted by $P$, can move freely with respect to $\r^2$. On $P$, we place the curve $\alpha$. Then the TreadmillSled $\gamma$ of $\alpha$ is the geometric locus that traces the origin $O\in P$ in the fixed plane $\r^2$  when we move $P$ in such a way that we are placing every $\alpha(t)$ (which is in plane $P$)   on top of the origin   $(0,0)$ making sure that $\alpha^\prime(t)$ points toward the positive $x$ direction. This definition is independent of re-parametrizations that preserve  orientation.  In our study, firstly we will find the TreadmillSled of constant-speed ramps and then we do an inverse process to recover the initial curve.   
 
 %%%%%%%%%%%%%%%%%
\section{Equation of the constant-speed ramp}\label{sec2}
%%%%%%%%%%%%%%%%%%%%%%%%%%

In this section we derive the  equations of motion for $M$ moving with constant-speed on a ramp under  the action of a central force field and a friction force: see Figure \ref{fig1}. 
Since the particle slides on the ramp, both, the trajectory of the particle and the shape of the ramp are described with the same curve, which we parametrize by arc-length $s$. According with the purpose of this paper, we will assume that $M$ moves with constant speed $v\not=0$. We reparametrize $\alpha$ to have constant-speed $v$ by means of  $\beta(t)=\alpha(vt)$.  The friction force is $\F_f(s)=-\mu\lambda(s)\alpha'(s)$, where $\mu>0$ is the kinematic friction coefficient and the normal force to the ramp  is $\N=\lambda\n$.  Recall that the vector $\n$ determines physically the ramp by Definition \ref{d-1}. Finally the central force $\F$ is 
 $$\F(\rr)=F(r)\hat{\rr}=\frac{F(r)}{r}\beta(t),\quad r=|\beta(t)|.$$

By  Newton's second law,  the differential equation describing the particle motion is 
 \begin{equation}\label{eq1}
 m \beta''(t)=\F(\rr)+\N(t)+\F_f(t).
 \end{equation}
The acceleration of $\beta$ is 
$$\beta''(t)=v^2\alpha''(s)=v^2\kappa(s)\n(s),$$
where $s=vt$ and $\kappa$ is the curvature of $\alpha$ defined as $\kappa(s)=\langle\alpha''(s),\mathbf{n}(s)\rangle$. Thus \eqref{eq1} is now
\begin{equation}\label{eq2}
 mv^2\kappa(s)\n(s)=\frac{F(r)}{|\alpha(s)|} \alpha(s)+\lambda(s)\n(s)-\mu\lambda(s)\alpha'(s).
 \end{equation}
By  using the Euclidean scalar product $\langle ,\rangle$,  we multiply    \eqref{eq2} by $\alpha'(s)$ and $\n(s)$, obtaining respectively
 $$0=\frac{F(r)}{|\alpha(s)|}\langle\alpha(s),\alpha'(s)\rangle-\mu\lambda(s)$$
$$ mv^2\kappa(s)=\frac{F(r)}{|\alpha(s)|}\langle\alpha(s),\n(s)\rangle +\lambda(s).$$
From both expressions we obtain $\lambda$, and  we deduce the characterization of a constant-speed ramp under a central force field.

\begin{theorem}\label{t1}
 Let $M$ be a particle of mass $m$ and let  $\F(\rr)$ be a central force field. Then  $\alpha=\alpha(s)$ is   constant-speed ramp  with velocity $v$ under the effect of $\F$  if and only if $\frac{F(r)}{|\alpha(s)|}\langle\alpha(s),\alpha'(s)\rangle\ge0$ and the curvature   $\kappa$  satisfies
\begin{equation}\label{mm}
\kappa(s)=  \frac{F(r)}{mv^2 r}\left(\frac{\langle\alpha(s),\alpha'(s)\rangle}{\mu}+  \langle\alpha(s),\n(s)\rangle\right),
\end{equation}
where $\mu$ is the constant friction coefficient and $r=|\alpha(s)|$. Here $s$ is the arc-length parameter of $\alpha$.

If $\F(\rr)$ is  a $n$th power central force, then  
\begin{equation}\label{ff2}
\kappa(s)= \frac{\varepsilon}{v^2} r^{n-1}\left( \frac{ \langle\alpha(s),\alpha'(s)\rangle}{\mu}+ \langle\alpha(s),\n(s)\rangle\right).
\end{equation}
\end{theorem}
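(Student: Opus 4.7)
The plan is to decompose Newton's vector equation \eqref{eq2} along the Frenet frame $\{\alpha'(s),\n(s)\}$ of the arc-length parametrized curve. First I would take the inner product of \eqref{eq2} with the unit tangent $\alpha'(s)$; because $\alpha'(s)\perp\n(s)$ and $|\alpha'(s)|=1$, both the curvature term and the normal-force term disappear, leaving the purely algebraic relation
$$0=\frac{F(r)}{r}\langle\alpha(s),\alpha'(s)\rangle-\mu\lambda(s),$$
which solves immediately for $\lambda(s)$ in terms of the tangential projection of the central force.

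Next I would dot \eqref{eq2} with $\n(s)$ to get
$$mv^2\kappa(s)=\frac{F(r)}{r}\langle\alpha(s),\n(s)\rangle+\lambda(s),$$
and substitute the $\lambda(s)$ just obtained. Dividing by $mv^2$ produces exactly \eqref{mm}. The inequality $\frac{F(r)}{r}\langle\alpha,\alpha'\rangle\ge 0$ is then the translation of the physical condition $\lambda\ge 0$: the ramp can only push the particle in the direction of $\n$, never pull it, which is built into the choice of outer normal in Definition \ref{d-1}.

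For the converse, given an arc-length $\alpha$ satisfying the inequality and \eqref{mm}, I would define $\lambda(s)$ by the tangential formula above (automatically non-negative by hypothesis) and reconstruct \eqref{eq2} by verifying both projections: the tangential one holds by construction, and the normal one follows by substituting \eqref{mm} back. Re-parametrizing $\beta(t)=\alpha(vt)$ then yields a genuine solution of Newton's equation traveling at constant speed $v$. The $n$th power specialization \eqref{ff2} is an immediate plug-in: $F(r)=\varepsilon m r^n$ reduces $F(r)/(mv^2 r)$ to $\varepsilon r^{n-1}/v^2$.

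The main subtlety, rather than a real technical obstacle, is cleanly phrasing the equivalence between the physical sign constraint on $\lambda$ and the stated inequality; the remainder is a short bookkeeping exercise on the two Frenet-frame components of \eqref{eq2}.
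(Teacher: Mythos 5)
Your proposal is correct and follows essentially the same route as the paper: project Newton's equation \eqref{eq2} onto $\alpha'(s)$ to solve for $\lambda(s)$, then onto $\n(s)$ and substitute, with the inequality encoding the sign constraint on the normal force. The paper's proof is exactly this computation (it leaves the converse implicit, as the elimination of $\lambda$ is reversible), so no further comment is needed.
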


  \begin{remark}
If a curve $\alpha(s)$ satisfies Equation \eqref{mm} but not the condition 
$ \frac{F(r)}{|\alpha(s)|}\langle \alpha(s),\alpha^\prime(s) \rangle\ge0$, then $\tilde{\alpha}=\alpha(-s)$ satisfies both condition to be a ramp. For this reason, every curve that satisfies Equation \eqref{mm} defines a ramp if its orientation (the selection of the solid part of the ramp) is selected properly.\end{remark}

In view of \eqref{eq2}, the sign of $\varepsilon$ in \eqref{pcf} has the following interpretation.  We know that 
 $$\langle\alpha(s),\alpha'(s)\rangle=\frac{\mu\lambda(s)}{F(r)} |\alpha(s)|=\varepsilon\frac{\mu\lambda(s) }{m} r^{1-n},$$
 hence
 $$\mbox{sgn}\left(\frac{d}{ds}|\alpha(s)|^2\right)=\mbox{sgn}(\varepsilon).$$
Thus if $\varepsilon=-1$ (resp. $\varepsilon=1$), the function $s\mapsto |\alpha(s)|$ is decreasing (resp. increasing). This implies   that at every point $s$ of the curve, the unit tangent of $\alpha$ points towards (resp. away) the round disc centered at $O$ of radius $|\alpha(s)|$. In other words, the trajectory of $\alpha$ goes `inside' (resp. `outside').

We particularize the  arguments of the proof of Theorem \ref{t1}  in case of no friction forces ($\mu=0$). Recall that if the particle moves  on a ramp with not friction with constant speed under the effect of the gravity force $(0,-mg)$, then, due to the conservation of the total energy, kinetic energy plus potential energy, this ramp must be a horizontal line. In case of a central force, we prove the following result.

\begin{proposition} Without friction, the constant-speed ramps   under the action of a central force field are circles centered at the origin.
\end{proposition}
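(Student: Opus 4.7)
The plan is to revisit the tangential component of Newton's equation \eqref{eq2} with $\mu = 0$; the friction term disappears before any division by $\mu$, so one cannot simply plug $\mu=0$ into the final curvature formula \eqref{mm}. Concretely, at the step in the proof of Theorem \ref{t1} where one takes the inner product of \eqref{eq2} with $\alpha'(s)$, the tangential component of the normal force $\mathbf{N}$ is zero, the tangential component of the acceleration vanishes because $\alpha$ is parametrized by arc length, and with $\mu=0$ there is no friction contribution. Hence the tangential equation reduces to
\begin{equation*}
0 = \frac{F(r)}{|\alpha(s)|}\langle \alpha(s),\alpha'(s)\rangle.
\end{equation*}

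Next I would use the standing hypothesis $\mathbf{F}\neq 0$, so $F(r)\neq 0$, to cancel the factor $F(r)/r$ and conclude $\langle \alpha(s),\alpha'(s)\rangle = 0$ for every $s$. Since $\tfrac{d}{ds}|\alpha(s)|^2 = 2\langle\alpha(s),\alpha'(s)\rangle$, this forces $|\alpha(s)|$ to be constant, say $|\alpha(s)|\equiv r_0>0$. Thus the trace of $\alpha$ is contained in the circle of radius $r_0$ centered at the origin.

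Finally, I would observe that a regular curve whose image lies on a circle must parametrize (an arc of) that circle: the trace is a connected subset of a one-dimensional submanifold, and regularity prevents it from being a single point. This is exactly the statement of the proposition. The argument has no serious obstacle; the only point worth stressing is that the conclusion is \emph{purely tangential} in origin, relying only on the central character of $\mathbf{F}$ and $F(r)\neq 0$, while the normal equation merely determines the magnitude $\lambda(s)$ of the reaction force and plays no role in fixing the shape.

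As a remark, one could alternatively argue by conservation of energy: a central force is conservative with potential $V(r)$ satisfying $V'(r)=-F(r)$, the normal force does no work, and constant speed means constant kinetic energy, so $V(r)$ is constant along $\alpha$; since $F(r)\neq 0$ implies $V$ is strictly monotone, $r$ must be constant. Both routes yield the same conclusion, and I would present the direct computational one to keep the treatment consistent with the derivation of Theorem \ref{t1}.
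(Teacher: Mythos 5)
Your proof is correct and follows essentially the same route as the paper: projecting \eqref{eq2} onto $\alpha'(s)$ with $\mu=0$ to get $\frac{F(r)}{|\alpha(s)|}\langle\alpha(s),\alpha'(s)\rangle=0$, using $F\neq 0$ to conclude $\langle\alpha,\alpha'\rangle\equiv 0$, and hence that $|\alpha|$ is constant. Your observations that one must return to \eqref{eq2} rather than substitute $\mu=0$ into \eqref{mm}, and that energy conservation gives an alternative argument, are correct but beyond what the paper records.
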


\begin{proof} 

If $\mu=0$, we deduce   from \eqref{eq2} that $\langle\alpha(s),\alpha'(s)\rangle=0$ for all $s$.  Thus the function $|\alpha(s)|^2$ is constant and this shows that the trace of $\alpha$ is contained in a circle centered at the origin $O$.  
\end{proof}

We study the existence of constant-speed ramps with simple geometries,    for example,    straight-lines and circles. 

\begin{corollary} For a central force field, there do not exist linear constant-speed ramps.
\end{corollary}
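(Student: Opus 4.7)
The plan is to derive a contradiction by showing that a straight line cannot satisfy the curvature equation of Theorem \ref{t1}. Suppose $\alpha=\alpha(s)$ is a straight-line constant-speed ramp parametrized by arc length. Since any line has vanishing curvature, $\kappa\equiv 0$. The standing hypothesis $\mathbf{F}\neq 0$ guarantees $F(r)\neq 0$ along $\alpha$, so the prefactor $F(r)/(mv^2 r)$ appearing in \eqref{mm} is nowhere zero. Consequently the characterization of Theorem \ref{t1} forces
\[
\frac{\langle \alpha(s),\alpha'(s)\rangle}{\mu}+\langle \alpha(s),\mathbf{n}(s)\rangle=0 \quad\text{for every }s\text{ in the domain of }\alpha.
\]

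Next I would exploit the explicit form of a line. Write $\alpha(s)=p+s\mathbf{t}$ with $p\in\r^2$ and $\mathbf{t}$ a unit vector, so that $\alpha'(s)=\mathbf{t}$ and $\mathbf{n}(s)=J\mathbf{t}$ are both constant. A direct computation yields $\langle \alpha(s),\alpha'(s)\rangle=\langle p,\mathbf{t}\rangle+s$ and $\langle \alpha(s),\mathbf{n}(s)\rangle=\langle p,J\mathbf{t}\rangle$, so the identity above reduces to
\[
\frac{s}{\mu}+\frac{\langle p,\mathbf{t}\rangle}{\mu}+\langle p,J\mathbf{t}\rangle=0.
\]
The left-hand side is a non-constant affine function of $s$ with nonzero slope $1/\mu$, so it cannot vanish on any interval of positive length. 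This contradiction rules out linear constant-speed ramps.

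A small caveat worth noting is that if the line were to pass through the origin, the central force $\mathbf{F}$ would fail to be defined there; in that situation one simply restricts attention to the open subset of the line on which $r>0$, and exactly the same affine identity must still hold on an open interval, producing the identical contradiction. I do not anticipate any substantive obstacle in the argument: it is essentially an immediate consequence of applying Theorem \ref{t1} to a curve with $\kappa\equiv 0$ and observing that $\langle\alpha,\alpha'\rangle$ is necessarily non-constant along an arc-length parametrized line.
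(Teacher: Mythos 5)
Your proposal is correct and follows essentially the same route as the paper: set $\kappa\equiv 0$ for a line, use $F(r)\neq 0$ to reduce \eqref{mm} to the vanishing of $\langle\alpha,\alpha'\rangle/\mu+\langle\alpha,\mathbf{n}\rangle$, and observe that this is a non-constant affine function of $s$, a contradiction. The remark about restricting away from the origin is a harmless extra precaution not needed in the paper's argument.
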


\begin{proof}
The proof is by contradiction. Suppose that   a straight-line $\alpha=\alpha(s)$ is  a solution of \eqref{mm}. If $\alpha$ parametrizes as $\alpha(s)=p_0+s\mathbf{u}$, $p_0\in\r^2$, $|\mathbf{u}|=1$, then $\kappa=0$ and \eqref{mm} implies
$$\langle\alpha(s),\alpha'(s)\rangle+\mu\langle\alpha(s),\n(s)\rangle=0.$$
Since $\alpha'(s)=\mathbf{u}$ and $\n(s)=J\mathbf{u}$,  we deduce
$$s+\langle p_0,\mathbf{u}\rangle+\mu\langle p_0,J\mathbf{u}\rangle=0$$
for all $s\in\r$, which is impossible. This contradiction proves the result.
\end{proof}

We study now the    constant-speed ramps that are circles. In the following result, we will prove the existence of circular constant-speed ramps, where the radius of the circle is not arbitrary, but it depends on the speed $v$.

 \begin{corollary} For a central force field $\F(\rr)$, a circle is a constant-speed ramp if and only if its centre is the origin $O$  and its radius $R$ satisfies 
 $$F(R)=-\frac{mv^2}{R}.$$
\end{corollary}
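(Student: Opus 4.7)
The plan is to substitute a generic circle into Equation \eqref{mm} and show by a symmetry argument that the center must coincide with $O$, after which the stated relation between $F(R)$ and $R$ drops out. Parametrize the circle by arc length as $\alpha(s)=p_0+R(\cos(s/R),\sin(s/R))$, write $p_0$ in polar form $p_0=\rho(\cos\theta_0,\sin\theta_0)$, and set $w=s/R-\theta_0$. A short trigonometric computation yields $\kappa\equiv 1/R$ together with
$$r^2=\rho^2+R^2+2R\rho\cos w,\qquad \langle\alpha,\alpha'\rangle=-\rho\sin w,\qquad \langle\alpha,\n\rangle=-\rho\cos w-R.$$

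With these ingredients, Equation \eqref{mm} becomes
$$\frac{1}{R}=\frac{F(r)}{mv^2\,r}\left(-\frac{\rho\sin w}{\mu}-\rho\cos w-R\right).$$
Since $F$ is nowhere zero, I can solve for the bracket and conclude that it is a function of $r$ alone. But $r$ depends on $w$ only through $\cos w$, so the bracket must be invariant under $w\mapsto -w$. This symmetry annihilates the odd $\sin w$ contribution and forces $\rho/\mu=0$; since $\mu>0$, we conclude $\rho=0$, i.e.\ $p_0=O$.

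Once the center is at $O$, the distance $r\equiv R$, $\langle\alpha,\alpha'\rangle=0$, and $\langle\alpha,\n\rangle=-R$, so \eqref{mm} collapses to $1/R=-F(R)/(mv^2)$, equivalently $F(R)=-mv^2/R$. The converse is by the same substitution: with $p_0=O$ and $F(R)=-mv^2/R$, Equation \eqref{mm} is satisfied identically along the circle.

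The main obstacle is the only-if direction, where a single scalar identity must pin down the two-dimensional location of the center. The crux is pairing the points $w$ and $-w$ along the circle: they sit at the same distance from the origin, so they feel the same magnitude of the central force, but they differ in the tangential geometry. This mismatch isolates the $\sin w$ term, which encodes precisely the eccentric part of $p_0$, and forces it to vanish.
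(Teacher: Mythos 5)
Your proof is correct, and while it opens the same way as the paper's --- substitute a generic arc-length circle into \eqref{mm} and force the center to the origin --- the mechanism you use to kill the eccentricity is genuinely different and arguably cleaner. The paper squares the resulting identity, expands it as a trigonometric polynomial $A_0+A_1\cos(s/R)+A_2\sin(s/R)+A_3\cos(2s/R)+A_4\sin(2s/R)=0$, and extracts $a=b=0$ from the explicit coefficients $A_3,A_4$; this requires computing those coefficients and has the slightly awkward feature that the displayed $A_i$ still carry the factor $F(r)^2$, which is not actually independent of $s$, so the ``constant coefficients'' step needs an extra word. Your parity argument avoids both the squaring and the coefficient computation: since $F$ never vanishes, the bracket equals $mv^2r/(RF(r))$, a function of $r$ alone, hence even in $w$ because $r^2=\rho^2+R^2+2R\rho\cos w$ is even, and the surviving odd part $-\rho\sin w/\mu$ must vanish identically, giving $\rho=0$ with no information about $F$ beyond $F\neq 0$. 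The one caveat --- shared equally by the paper's argument, which also treats the identity as holding for all $s$ --- is that you implicitly need the parameter range to contain a pair $\{w,-w\}$ with $\sin w\neq 0$; this is automatic when the whole circle is the ramp, which is the intended reading of the statement. Your endgame ($\rho=0$ collapses \eqref{mm} to $F(R)=-mv^2/R$) and the converse verification coincide with the paper's.
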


\begin{proof} Suppose that the circle parametrizes as
$$\alpha(s)=(a,b)+R\left(\cos(s/R),\sin(s/R)\right).$$
Then $\kappa=1/R$ and 
$$\alpha'(s)=\left(-\sin(s/R),\cos(s/R)\right),\ \ \n(s)=-\left(\cos(s/R),\sin(s/R)\right).$$
Replacing $\alpha$ and $\mathbf{n}$ in Equation  \eqref{mm}, we obtain 
\begin{equation}\label{FF}
 \frac{mv^2 r}{R}=F(r)\left(\frac{-a\sin(s/R)+b\cos(s/R)}{\mu} -(a\cos(s/R)+b\sin(s/R))-R\right),
 \end{equation}
 where 
 $$r^2=|\alpha(s)|^2=a^2+b^2+R^2+2R(a\cos(s/r)+b\sin(s/R)).$$
Squaring \eqref{FF} and writing in one hand side, we obtain  
$$A_0+A_1\cos(s/R)+A_2\sin(s/R)+A_3\cos(2s/R)+A_4\sin(2s/R)=0,$$
where $A_i$ are constants independent from $s$. Thus all coefficients $A_i$ vanish identically. The computation of $A_4$ and $A_3$ gives
$$A_4=\frac{F(r)^2}{\mu ^2}(a \mu -b) (a+b \mu ),$$
$$A_3=\frac{F(r)^2}{2 \mu ^2}((a-b)\mu-a-b)((a+b)\mu+a-b).$$
 Since $F\not=0$,   from $A_4=0$ we deduce that $b=a\mu$ or $a=-b\mu$. Suppose $b=a\mu$. Then $A_3=0$ simplifies into
$$-\frac{F(r)^2}{2 \mu ^2} \left(\mu ^2+1\right)^2a^2=0,$$
obtaining $a=0$, hence $b=0$. Similarly, if $a=b\mu$, the equation $A_3=0$ reduces in
$$\frac{b^2 \left(\mu ^2+1\right)^2}{2 \mu ^2}=0,$$
obtaining $b=a=0$ again. In both cases, we have proved that the center $(a,b)$ of the circle is the origin $O$. On the other hand,   \eqref{FF} simplifies into $mv^2=-F(R)R$, proving the result.
\end{proof} 

It is immediate the following consequence.
 
\begin{corollary} \label{c-23}
 For the $n$th power central force  $\F(\rr)=\varepsilon m r^n \hat{\rr}$,  the existence of circular constant-speed ramps occurs when $\varepsilon=-1$ and $v =R^{(n+1)/2}$. Consequently, 
\begin{enumerate}
\item If $n\not=-1$, for each value of $v$, there is   only one  circular ramp. 
\item If $n=-1$ (inverse central harmonic oscillator), the circular ramps only appear if the speed is $v=1$, and in such a case,  any  circle centered at the origin is a constant-speed ramp. 
\end{enumerate}
\end{corollary}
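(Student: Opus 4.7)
The plan is to apply the previous corollary directly to the $n$th power central force, for which $F(r) = \varepsilon m r^n$. Substituting this into the necessary and sufficient condition $F(R) = -mv^2/R$ obtained in the preceding corollary gives
\begin{equation*}
\varepsilon m R^n = -\frac{mv^2}{R}, \qquad \text{that is,} \qquad \varepsilon R^{n+1} = -v^2.
\end{equation*}
The right-hand side is strictly negative (we assumed $v \neq 0$ throughout) and the factor $R^{n+1}$ is strictly positive because $R > 0$. Consequently the identity can hold only when $\varepsilon = -1$, and in that case it reduces to $R^{n+1} = v^2$, equivalently $v = R^{(n+1)/2}$.

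Having pinned down the compatibility relation between $v$, $R$ and $n$, the two assertions of the corollary follow by inspecting this equation as a function of $R$ for fixed $v$. If $n \neq -1$, the exponent $(n+1)/2$ is nonzero, so the map $R \mapsto R^{(n+1)/2}$ is a strict monotone bijection from $(0,\infty)$ onto $(0,\infty)$; hence for every admissible speed $v>0$ there is precisely one radius $R = v^{2/(n+1)}$ giving rise to a circular ramp. If $n = -1$, the exponent is zero, and the relation collapses to $v = 1$; thus no circular ramps exist when $v \neq 1$, whereas when $v = 1$ the equation is satisfied for every $R > 0$, yielding the whole family of circles centered at $O$ as constant-speed ramps.

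There is no real obstacle here: the result is essentially an algebraic specialization of the previous corollary, and the only point requiring care is the sign analysis that forces $\varepsilon = -1$ (i.e., the central force must be attractive for a circular ramp to exist).
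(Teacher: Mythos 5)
Your argument is correct and is exactly the deduction the paper intends: the paper states this corollary as an immediate consequence of the preceding one, and your substitution of $F(R)=\varepsilon m R^n$ into $F(R)=-mv^2/R$, the sign analysis forcing $\varepsilon=-1$, and the case split on $n=-1$ versus $n\neq-1$ supply precisely the omitted details. No gaps.
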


\begin{remark}  In Corollary \ref{c-23} we have found closed periodic constant-speed ramps. This can be viewed in connection with the 
Bertrand's theorem that asserts that the only attractive central potentials   in Euclidean space that can yield closed bounded orbits are the 
harmonic oscillator ($n=1$) and the Newtonian potential ($n=-2$) in \eqref{pcf}:  \cite{be}, see also \cite[Ap. A]{go}.
\end{remark}

Following with the study of constant-speed ramps, it is immediate that if we rotate a solution $\alpha$ of \eqref{mm} with respect to the origin $O$, then the resulting curve is also a solution of \eqref{mm}. We study how   a dilation affects  the shape of  a constant-speed ramp, focusing in the case of the $n$th power central force.

 \begin{corollary}\label{c-dilation} Let $\alpha=\alpha(s)$ be a  constant-speed ramp for  the $n$th power central force $\F(\rr)$  and velocity $v$. If $c >0$, then $\eta(s)=c \alpha(s/c )$ is a constant-speed ramp for $\F(\rr)$   with the same   friction constant   and velocity $c ^{(n+1)/2}v$.  In particular, for $n=-1$, the velocity does not change.
\end{corollary}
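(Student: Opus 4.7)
The plan is to verify directly that $\eta(s)=c\alpha(s/c)$ satisfies the curvature equation \eqref{ff2} with velocity $\tilde v = c^{(n+1)/2}v$ and the same friction coefficient $\mu$, so that Theorem \ref{t1} applies.

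First I would record the elementary scaling behavior. Since $\alpha$ is arc-length parametrized, $\eta'(s)=\alpha'(s/c)$ is a unit vector, so $\eta$ is also arc-length parametrized, and $\mathbf{n}_\eta(s)=\mathbf{n}_\alpha(s/c)$ (positive dilation preserves the tangent direction, hence the counterclockwise rotation). Differentiating once more, $\eta''(s)=\tfrac{1}{c}\alpha''(s/c)$, so the signed curvature satisfies $\kappa_\eta(s)=\tfrac{1}{c}\kappa_\alpha(s/c)$. Likewise $|\eta(s)|=c\,r$ where $r=|\alpha(s/c)|$, and
\[
\langle \eta(s),\eta'(s)\rangle = c\,\langle\alpha(s/c),\alpha'(s/c)\rangle,\qquad \langle \eta(s),\mathbf{n}_\eta(s)\rangle = c\,\langle\alpha(s/c),\mathbf{n}_\alpha(s/c)\rangle.
\]

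Next I would substitute these expressions into the right-hand side of \eqref{ff2} written for $\eta$ with the claimed velocity $\tilde v = c^{(n+1)/2}v$. The factor $(cr)^{n-1}$ combined with the extra $c$ from the inner products gives an overall $c^n$, while the prefactor $1/\tilde v^2 = c^{-(n+1)}/v^2$ produces a net $c^{-1}/v^2$. Thus the right-hand side becomes
\[
\frac{1}{c}\cdot\frac{\varepsilon}{v^2}r^{n-1}\left(\frac{\langle\alpha(s/c),\alpha'(s/c)\rangle}{\mu}+\langle\alpha(s/c),\mathbf{n}_\alpha(s/c)\rangle\right),
\]
which by \eqref{ff2} applied to $\alpha$ equals $\tfrac{1}{c}\kappa_\alpha(s/c)=\kappa_\eta(s)$. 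Matching the powers of $c$ shows that no other choice of $\tilde v$ works, so the exponent $(n+1)/2$ is forced; specializing to $n=-1$ yields $\tilde v = v$.

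Finally I would note the side condition $\frac{F(r)}{|\alpha|}\langle\alpha,\alpha'\rangle\ge 0$ from Theorem \ref{t1}: for the $n$th power force this reduces to $\varepsilon\langle\alpha,\alpha'\rangle\ge 0$, and since $\langle\eta(s),\eta'(s)\rangle = c\langle\alpha(s/c),\alpha'(s/c)\rangle$ with $c>0$, the sign is preserved. The computation is entirely routine; the only point requiring any care is bookkeeping the powers of $c$ coming from the three places ($r^{n-1}$, the two inner products, and the curvature), which is what pins down the velocity scaling law $\tilde v = c^{(n+1)/2}v$.
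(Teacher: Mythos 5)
Your proposal is correct and follows essentially the same route as the paper: both verify directly that $\eta$ is arc-length parametrized with $\mathbf{n}_\eta(s)=\mathbf{n}_\alpha(s/c)$ and $\kappa_\eta(s)=\tfrac{1}{c}\kappa_\alpha(s/c)$, then track the powers of $c$ through \eqref{ff2} to find that the velocity must scale as $c^{(n+1)/2}v$. Your additional check that the sign condition $\frac{F(r)}{|\alpha|}\langle\alpha,\alpha'\rangle\ge 0$ from Theorem \ref{t1} is preserved under the dilation is a small completeness improvement over the paper's proof, which verifies only the curvature equation.
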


\begin{proof} Since $\eta$ is parametrized by arc-length, $\n_\eta=J\eta'(s)=J\alpha'(s)=\n(s)$. Hence, its curvature $\kappa_\eta$ is 
$$\kappa_\eta(s)=\langle\eta''(s),\n_\eta(s)\rangle=\frac{1}{c}\langle\alpha''(s/c),\n_\alpha(s/c)\rangle=\frac{\kappa_\alpha(s)}{c}.$$
  If we write \eqref{ff2} in terms of $\eta$, we obtain
$$c \kappa_\eta(s)=\varepsilon  \frac{|\eta(s)|^{n-1}}{v^2c ^{n-1}} \left( \frac{1}{\mu}  \langle\dfrac{\eta(s)}{c },\eta'(s)\rangle + \langle\frac{\eta(s)}{c },\n_\eta(s)\rangle\right),$$
which simplifies into
$$\kappa_\eta(s)=\varepsilon \frac{|\eta(s)|^{n-1}}{c^{n+1}v^2  } \left( \frac{ \langle\eta(s) ,\eta'(s)\rangle}{\mu}+ \langle\eta(s) ,\n_\eta(s)\rangle\right).$$
 This proves the result.
 \end{proof}

%%%%%%%%%%%%%%%%%%%%%%%%%%%%%%%%%%%%%%%
\section{The  inverse central harmonic oscillator: case $v=1$} \label{sec3}
%%%%%%%%%%%%%%%%%%%%%%%%%%%%%%%%%%%%%%%%%%%%

In the next two sections,  we focus in the particular case that the force $\F(\rr)$ is the inverse central harmonic oscillator
\begin{equation}\label{ic}
\F(\rr)=-m\dfrac{\hat{\rr}}{r}.
\end{equation} 
First examples of constant-speed ramps are those described by circles centered at the origin. This was proved in  Corollary \ref{c-23} and we now recall  here again.

 \begin{proposition}\label{pr-31}
  For the inverse central harmonic oscillator $\F(\rr)=-m \hat{\rr}/r$, any circle centered at the origin is  a constant-speed ramp if    $v=1$, and  there are not circular constant-speed ramps if $v\not=1$.
 \end{proposition}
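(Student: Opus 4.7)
The plan is to deduce Proposition \ref{pr-31} as a direct specialization of Corollary \ref{c-23}. The inverse central harmonic oscillator $\F(\rr)=-m\hat{\rr}/r$ fits into the $n$th power central force template \eqref{pcf} with $n=-1$ and $\varepsilon=-1$, so the circular ramp existence condition derived there applies verbatim.

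First I would observe that, according to Corollary \ref{c-23}, circular constant-speed ramps under $\F(\rr)=\varepsilon m r^n\hat{\rr}$ exist only when $\varepsilon=-1$ and, in that case, the radius $R$ and speed $v$ are tied by $v=R^{(n+1)/2}$. Substituting $n=-1$ makes the exponent $(n+1)/2$ equal to $0$, so this relation collapses to $v=R^0=1$, independently of $R$. This immediately yields both halves of the proposition: if $v\neq 1$ the relation cannot hold for any $R>0$, so no circular ramp exists; and if $v=1$ it holds for every $R>0$, so every circle centered at the origin is admissible.

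Finally, one should note that Corollary \ref{c-23} already rules out circles whose center differs from the origin $O$, so no separate argument is needed for that. There is no genuine obstacle here: the whole proposition is a one-line specialization, and the only thing to be careful about is the bookkeeping of signs ($\varepsilon=-1$ matches the attractive force in \eqref{ic}) and of the exponent collapsing to zero when $n=-1$.
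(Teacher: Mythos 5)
Your proposal is correct and is essentially the paper's own argument: the authors state explicitly that Proposition \ref{pr-31} "was proved in Corollary \ref{c-23}", which in turn comes from the circular-ramp condition $F(R)=-mv^2/R$, and your specialization $v=R^{(n+1)/2}=R^0=1$ at $n=-1$, $\varepsilon=-1$ is exactly the intended deduction.
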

 
Let $\alpha=\alpha(s)$ be a constant-speed ramp parametrized by arc-length $s$. If $\alpha(s)=(x(s),y(s))$,  $s\in I\subset\r$, then  
$$\alpha'(s)=(x'(s),y'(s)),\quad \n(s)=J\alpha'(s)=(-y'(s),x'(s)).$$
Equation \eqref{ff2} is
\begin{equation}\label{eq3}
 \kappa=-\frac{1}{v^2(x^2+y^2)}\left(\frac{xx'+yy'}{\mu}-xy'+yx'\right).
\end{equation}
 We now use the notion of TreadmillSled introduced  in \cite{pe1}. Instead of solving for the curve $\alpha$ by means of  \eqref{eq3},    we first compute the TreadmillSled of $\alpha$ and subsequently, the curve $\alpha$.  Since in this case,   $\alpha$ is parametrized by arc-length, the TreadmillSled of $\alpha$ is   
$$ \gamma(s)=-(\langle\alpha(s),\alpha'(s)\rangle,\langle\alpha(s),\n(s)\rangle):=(\xi_1(s),\xi_2(s)).$$
 By the parametrization of $\alpha$, 
\begin{equation}\label{pg}
 \gamma(s)=-\left(x(s)x'(s)+y(s)y'(s),-x(s)y'(s)+y(s)x'(s)\right).
 \end{equation}
With this notation, Equation \eqref{eq3} is now
\begin{equation}\label{kk}
\kappa=\frac{1}{v^2\mu(x^2+y^2)}\left( \xi_1 +\mu \xi_2\right).
\end{equation}
By using the Frenet equations 
$$\alpha''(s)=\kappa(s)\n(s),\quad \n'(s)=-\kappa(s)\alpha'(s),$$
 we have
\begin{equation}\label{eq0}
\begin{split}&\xi_1'(s)=-1-\langle\alpha(s),\alpha''(s)\rangle=-1+\kappa(s)\xi_2(s)\\
&\xi_2'(s)=-\langle\alpha(s),\n'(s)\rangle=-\kappa(s)\xi_1(s).
\end{split}
\end{equation}
 In these expressions for $\xi_1'$ and $\xi_2'$,  replacing $\kappa$ by \eqref{kk},   and taking into account that 
$$x(s)^2+y(s)^2=|\alpha(s)|^2=|\gamma(s)|^2=\xi_1(s)^2+\xi_2(s)^2,$$
give us    the following autonomous system
\begin{equation}\label{eqs}
\begin{split}
&\xi_1'=-1+\frac{1}{v^2\mu(\xi_1^2+\xi_2^2)}\left( \xi_1 +\mu \xi_2\right)\xi_2\\
&\xi_2' =-\frac{1}{v^2\mu(\xi_1^2+\xi_2^2)}\left( \xi_1 +\mu \xi_2\right)\xi_1.
\end{split}
\end{equation}
We compute the critical points of the above  system. By $\xi_2'=0$, we have   $\xi_1+\mu\xi_2=0$ or $\xi_1=0$. In the first case, from   $\xi_1'=0$, we have $\mu v^2(1+\mu^2)\xi_2^2=0$, that is, $(\xi_1,\xi_2)=(0,0)$, which is not possible. We then conclude  $\xi_1=0$. By $\xi_1'=0$ again,    we deduce   $(v^2-1)\mu\xi_2^2=0$. Since $\xi_2\not=0$, then necessarily $v=1$. 

We have proved that there are infinite critical points, namely, $\{(0,a):a\not=0\}$ which only occur when $v=1$. Each one of these points $(0,a)$ corresponds with a constant TreadmillSled $\gamma(s)=(0,a)$ for all $s\in I$.  From \eqref{pg}, 
$\langle\alpha(s),\alpha'(s)\rangle=0$ and $\langle\alpha(s),\n(s)\rangle=a$. Consequently, the curve  $\alpha$ is a circle centered at the origin $O$, proving   that the critical points of \eqref{eqs} correspond with the (circular)  solutions of Proposition \ref{pr-31}. 

On the other hand, because the vertical line $\xi_1=0$ is formed by stationary points of \eqref{eqs} when $v=1$, any trajectory cannot meet the $\xi_2$-axis. In particular, every trajectory of \eqref{eqs} lies in the half-plane $\xi_1<0$ or in the half-plane $\xi_1>0$. 

In what  follows, we need to know how to recover the curve $\alpha$ in terms of its   TreadmillSled curve   $\gamma$. The proof of the   reversing process  is given in \cite[Prop. 2.11]{P}.

\begin{proposition}\label{pr-t}
 Let $\gamma(t)=(\xi_1(t),\xi_2(t))$ be a regular curve. Then $\gamma$ is the TreadmillSled of a regular curve $\alpha$ if and only if $\xi_2'(t)=-f(t)\xi_1(t)$ for some continuous function $f$ and $\xi_2f-\xi_1'>0$. Moreover, if $G(t)$ is an antiderivative of $f(t)$, then 
\begin{equation}\label{tsa}
\alpha(t)=- \left(\begin{array}{ll}\cos(G(t))&-\sin(G(t))\\ \sin(G(t))&\cos(G(t))\end{array}\right)\gamma(t).
\end{equation}
In particular, $\gamma$ intersects orthogonally the $y$-line. The solution $\alpha$ is unique up to a rotation.
\end{proposition}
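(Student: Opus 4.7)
The plan is to prove both implications by direct computation, starting from the general form of the TreadmillSled: for any regular curve $\alpha$ with unit tangent $T$ and unit normal $\n=JT$, one has $\gamma=(-\langle\alpha,T\rangle,-\langle\alpha,\n\rangle)$. Let $v=|\alpha'|$ and let $\kappa$ denote the signed curvature of $\alpha$.

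For the forward direction, I would differentiate $\xi_1=-\langle\alpha,T\rangle$ and $\xi_2=-\langle\alpha,\n\rangle$ using $\alpha'=vT$, $T'=v\kappa\,\n$ and $\n'=-v\kappa\,T$. This yields
$$\xi_1'=-v+v\kappa\,\xi_2,\qquad \xi_2'=-v\kappa\,\xi_1.$$
Setting $f:=v\kappa$ (a continuous function of $t$) both required conditions hold, namely $\xi_2'=-f\xi_1$ and $\xi_2 f-\xi_1'=v>0$. Note that $f$ coincides with the turning rate of $T$: writing $T=(\cos\theta,\sin\theta)$, one gets $\theta'=v\kappa=f$, so any antiderivative $G$ of $f$ agrees with $\theta$ up to an additive constant.

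For the converse, given $\gamma$ satisfying the two hypotheses, I would define $\alpha(t)=-R(G(t))\gamma(t)$, where $R(G)$ denotes counterclockwise rotation by angle $G$, as in formula (\ref{tsa}). The crucial computation is the differentiation of $\alpha$: using the identity $R'(G)=fJR(G)$ (where $J$ is the $\pi/2$-rotation) and substituting $\gamma'=(\xi_1',-f\xi_1)$ via the hypothesis $\xi_2'=-f\xi_1$, the terms involving $\xi_1$ and $f$ cancel and one is left with
$$\alpha'(t)=\bigl(f(t)\xi_2(t)-\xi_1'(t)\bigr)\,(\cos G(t),\sin G(t)).$$
Since $f\xi_2-\xi_1'>0$, this shows $\alpha$ is regular with speed $|\alpha'|=f\xi_2-\xi_1'$, unit tangent $T=(\cos G,\sin G)$ and unit normal $\n=(-\sin G,\cos G)$. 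Taking inner products then gives $-\langle\alpha,T\rangle=\xi_1$ and $-\langle\alpha,\n\rangle=\xi_2$, so $\gamma$ is indeed the TreadmillSled of $\alpha$.

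The remaining statements are quick consequences. At any $t_0$ with $\xi_1(t_0)=0$, the hypothesis $\xi_2'=-f\xi_1$ forces $\xi_2'(t_0)=0$, so $\gamma'(t_0)$ is horizontal and $\gamma$ meets the $y$-axis orthogonally. Uniqueness up to rotation holds because $G$ is determined only up to an additive constant $c$, and replacing $G$ by $G+c$ in the formula yields $-R(G+c)\gamma=R(c)(-R(G)\gamma)=R(c)\alpha$, a rigid rotation of the original $\alpha$. I expect the main obstacle to be nothing conceptual but rather the careful bookkeeping in the matrix computation of $\alpha'$: one must correctly derive $R'=fJR$ and verify that all four summands in $\alpha'=-fJR\gamma-R\gamma'$ conspire to collapse into a scalar multiple of $(\cos G,\sin G)$.
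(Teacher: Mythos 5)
Your proof is correct. Note that the paper does not actually prove this proposition at all --- it simply cites \cite[Prop.\ 2.11]{P} for the ``reversing process'' --- so there is no in-paper argument to compare against; your write-up supplies a self-contained verification. Your forward direction is exactly the computation the paper performs later in the arc-length case (equations \eqref{eq0}, where $v=1$ and $f=\kappa$), and your converse computation is sound: with $R'(G)=fJR(G)$ and $\gamma'=(\xi_1',-f\xi_1)$ one indeed gets $\alpha'=(f\xi_2-\xi_1')(\cos G,\sin G)$, after which the identities $-\langle\alpha,T\rangle=\xi_1$ and $-\langle\alpha,\n\rangle=\xi_2$ follow by orthogonality of $R(G)$. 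The only point worth making explicit in the uniqueness claim is that $f$ itself is determined by $\gamma$: one has $f=-\xi_2'/\xi_1$ wherever $\xi_1\neq 0$, and $\xi_1$ cannot vanish on an interval (there $\xi_1'=0$ and $\xi_2'=-f\xi_1=0$ would contradict regularity of $\gamma$), so $f$ is fixed on a dense set and hence everywhere by continuity; only the additive constant in $G$ remains free, which is precisely the rotational ambiguity you identified.
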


We give some  examples   of Proposition \eqref{pr-t}.
\begin{enumerate}
\item For any $r>0$, the point $(0,r)$ is the TreadmillSled of the circle $\alpha(t)=r(\cos t,\sin t)$. This is a direct computation. 

\item Let $\gamma(t)=(h(t),ah(t))$, $a\in\r$, any non vertical half-line that starts at the origin, where $h'<0$. Then $f(t)=-ah'(t)/h(t)$, so $G(t)=-a\log(h(t))$. With the change of variable $h(t)\rightarrow e^{-t}$, then \eqref{tsa} yields
$$\alpha(t)=-e^{-t}\left(\cos(at)-a\sin(at),\sin(at)+a\cos(at)\right).$$
This curve is a logarithmic spiral.
\end{enumerate}
 
 The study of the constant-speed ramps for the inverse central harmonic oscillator will be separated in two cases depending on the value of the velocity $v$, namely, $v=1$ or $v\not=1$.  The first case, that is, $v=1$, will be taken care in this section, whereas the case $v\not=1$ will  be addressed  in Section \ref{sec4}.

Suppose   $v=1$. The system \eqref{eqs} is 
\begin{equation}\label{eqs2}
\begin{split}
&\xi_1'=\frac{\xi_1}{\mu(\xi_1^2+\xi_2^2)}(-\mu\xi_1+\xi_2)\\
&\xi_2' =-\frac{\xi_1}{\mu(\xi_1^2+\xi_2^2)}(\xi_1+\mu\xi_2).
\end{split}
\end{equation}
Let us observe that the computation of $\alpha$ by means of its TreadmillSled $\gamma$ is invariant under reparametrizations of $\gamma$. 
Thus  we can replace  \eqref{eqs2} by the following system  
\begin{equation}\label{eqs3}
\begin{split}
&\phi_1'=-\mu\phi_1+\phi_2\\
&\phi_2' =-\phi_1-\mu\phi_2.
\end{split}
\end{equation}
The only stationary  point of this system is the origin $(0,0)$ and the eigenvalues of the linearized sytem at $(0,0)$ are $-\mu\pm i$. Since $\mu>0$, the complex conjugate numbers have a negative real part and the stationary point   is an stable focus. In  the phase portrait, the  trajectories are spirals that  twist approaching the origin: see Figure \ref{f-phase1}.

\begin{figure}[hbtp]
\begin{center}\includegraphics[width=.5\textwidth]{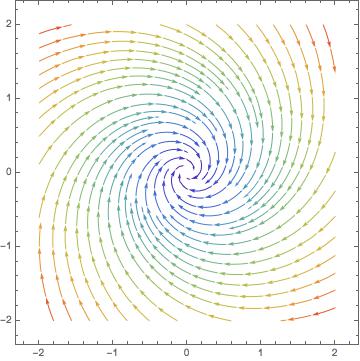}
\end{center}
\caption{Case $v=1$. The phase portrait of the system \eqref{eqs3}, with $\mu=0.5$. The  origin $(0,0)$ is the only equilibrium point and it is a stable focus. }\label{f-phase1}
\end{figure}
 
The solutions of the linear system \eqref{eqs3} are
\begin{equation}\label{eqs4}
\begin{split}
&\phi_1(t)=e^{-\mu t} \left(c_1 \cos (t)+c_2 \sin (t)\right)\\
&\phi_2(t) =e^{-\mu t} \left(c_2 \cos (t)-c_1 \sin (t)\right),
\end{split}
\end{equation}
where $c_1,c_2\in\r$. After a rotation about $O$ and a dilation (Corollary \ref{c-dilation}), we take the initial conditions for the ramp \eqref{ff2}
$$\alpha(0)=(x(0),y(0))=(1,0)$$
$$ \alpha'(0)=(x'(0),y'(0))=(\cos(u), \sin(u)),$$
where $u\in [\pi/2,3\pi/2)$. Then 
$$\xi_1(0)=-\cos(u),\ \xi_2(0)=\sin(u).$$
Notice that if $u=\pi/2$, then  $\gamma(0)=(0,1)$ and, as pointed out before, this point on its own is the TreadmillSled of a circle centered at the origin.  

Assume now that $u\not=\pi/2$. The solution   \eqref{eqs4} is
\begin{equation}\label{eqs5}
\gamma(t)=e^{-\mu t}(- \cos (t+u),  \sin (t+u)).
\end{equation}
 Following the notation of  Proposition \ref{pr-t} to calculate the curve $\alpha$, we have
 $$f(t)=-\frac{\phi_2'(t)}{\phi_1(t)}=1-\mu\tan(t+u)$$
 with the condition 
 $$0<\phi_2(t)f(t)-\phi_1'(t)=-\frac{\mu}{\cos(t+u)}e^{-\mu t}.$$
This implies $\cos(u+t)<0$. Due to the fact that $t$ is near zero and $u$ lies between $\pi/2$ and $3\pi/2$, we deduce that   $t\in (\pi/2-u,3\pi/2-u)$. 
  
 The function $G$ is 
 $$G(t)=\int f(t) dt=t+\mu\log(-\cos(t+u))+k,$$
 where $k\in\r$.    In order to fulfill the initial conditions $x(0)=1$, $y(0)=0$, we take $k=u-\mu\log(-\cos(u))$.   Finally, by \eqref{tsa}
   
 \begin{eqnarray}\label{alphaforveq1}
 \alpha(t)&=&-e^{-\mu t}  \left(\begin{array}{ll}\cos(G(t))&-\sin(G(t))\\ \nonumber
 \sin(G(t))&\cos(G(t))\end{array}\right)\left(\begin{array}{l} - \cos (t+u)\\  \sin (t+u)\end{array}\right)\\ 
 &=&e^{-\mu t}\left(\cos \left(\mu  \log \frac{\cos (t+u)}{\cos(u)}\right),-\sin \left(\mu  \log \frac{\cos (t+u)}{\cos(u)}\right)\right)
 \end{eqnarray}
for $t\in (\pi/2-u,3\pi/2-u)$. 

We summarize the above  arguments in the following result.

\begin{theorem} \label{tv1}
If $v=1$, the TreadsmillSled of the constant-speed ramps for  the inverse central harmonic oscillator are points of the $\xi_2$-axis or (part of) logarithmic spirals. When the TreadmillSled is a logarithmic spiral, the constant-speed ramp is given by the expression \eqref{alphaforveq1}. 
\end{theorem}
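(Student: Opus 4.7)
The plan is to identify every constant-speed ramp by first finding all admissible TreadmillSleds $\gamma = (\xi_1,\xi_2)$ via the ODE system \eqref{eqs} specialized to $v=1$, and then recovering $\alpha$ from $\gamma$ through Proposition \ref{pr-t}. The first key observation I would make is that the map $\gamma \mapsto \alpha$ is invariant under orientation-preserving reparametrizations of $\gamma$. Consequently, away from the $\xi_2$-axis one may multiply the nonlinear system \eqref{eqs2} through by the positive factor $\mu(\xi_1^2+\xi_2^2)/\xi_1$ without altering the orbits, which is exactly the passage displayed in the excerpt from \eqref{eqs2} to the linear system \eqref{eqs3}.

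Next I would analyze \eqref{eqs3}. The only equilibrium is $(0,0)$, and the eigenvalues $-\mu\pm i$ make it a stable focus, so every nontrivial trajectory is a logarithmic spiral converging to the origin; explicitly, the general solution is \eqref{eqs4}. By the rotation invariance of \eqref{ff2} about the origin I may normalize $\alpha(0)=(1,0)$ and $\alpha'(0)=(\cos u,\sin u)$, which fixes the two free constants and reduces the trajectory family to the one-parameter family \eqref{eqs5} indexed by $u\in[\pi/2,3\pi/2)$. The degenerate value $u=\pi/2$ gives the single point $(0,1)$ as TreadmillSled, and combined with the earlier observation that the equilibria of \eqref{eqs} fill the $\xi_2$-axis (each corresponding to a circle centered at $O$), this accounts for the first alternative in the statement.

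For $u\ne \pi/2$ the TreadmillSled is a genuine logarithmic spiral, and I would apply Proposition \ref{pr-t} to invert it. From \eqref{eqs5} the auxiliary function is $f(t)=-\xi_2'(t)/\xi_1(t)=1-\mu\tan(t+u)$; the positivity requirement $\xi_2 f-\xi_1'>0$ becomes $\cos(t+u)<0$, forcing $t\in(\pi/2-u,3\pi/2-u)$. An antiderivative is $G(t)=t+\mu\log(-\cos(t+u))+k$, with $k=u-\mu\log(-\cos u)$ chosen so that $\alpha(0)=(1,0)$. Substituting $G$ and $\gamma$ into \eqref{tsa} and simplifying produces \eqref{alphaforveq1}.

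The main obstacle will be the last step: verifying that the rotation matrix in \eqref{tsa} combines cleanly with the spiral \eqref{eqs5} so that the angle $t+u$ cancels and $G(t)$ collapses into the single logarithmic term $\mu\log(\cos(t+u)/\cos u)$ appearing in \eqref{alphaforveq1}. This is a careful but mechanical trigonometric computation, hinging on the identity $G(t)-(t+u) = \mu\log(\cos(t+u)/\cos u)$, which converts the product of the rotation by $G(t)$ and the rotation by $-(t+u)$ hidden in $\gamma$ into a pure rotation by the desired logarithmic angle.
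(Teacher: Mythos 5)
Your proposal is correct and follows essentially the same route as the paper: linearize \eqref{eqs2} to \eqref{eqs3} by a reparametrization, identify the stable focus with eigenvalues $-\mu\pm i$, normalize the initial data by a rotation and dilation to obtain the one-parameter family \eqref{eqs5} (with $u=\pi/2$ giving the point TreadmillSleds of circles), and invert via Proposition \ref{pr-t} with $f(t)=1-\mu\tan(t+u)$, the constraint $\cos(t+u)<0$, and $G(t)=t+\mu\log(-\cos(t+u))+k$ to arrive at \eqref{alphaforveq1}. The key cancellation you flag, $G(t)-(t+u)=\mu\log\bigl(\cos(t+u)/\cos u\bigr)$, is exactly the identity the paper's computation rests on.
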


 If $v=1$, then the    TreadmillSled curve   parametrized by \eqref{eqs5} is only defined in an interval of $\r$ with the condition that  the trace of $\gamma$ is contained either in the half-plane $\xi_1>0$ or $\xi_1<0$. Thus the whole spiral $\gamma(t)$ cannot be the TreadmillSled of a curve. However,  the connected pieces that lie on the half-planes $\xi_1>0$ or $\xi_1<0$ are ThreadmillSled of curves.  See Figure \ref{fig-v1}. 

\begin{figure}[hbtp]
\begin{center}\includegraphics[width=.4\textwidth]{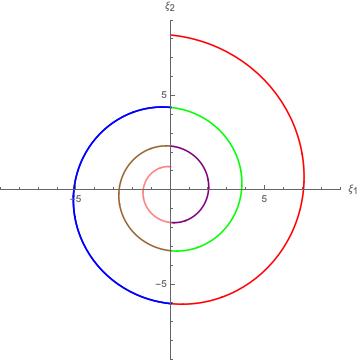}\quad \includegraphics[width=.4\textwidth]{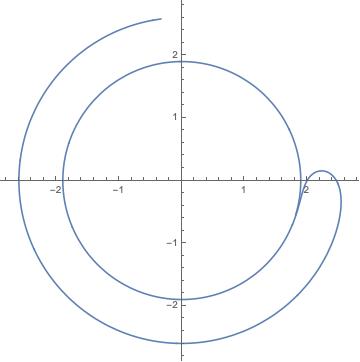}
\end{center}
\caption{Case $v=1$. The purple part of the logarithmic spiral (left) is the TreadmillSled of the non-circular ramp (right).} \label{fig-v1}
\end{figure}

%%%%%%%%%%%%%%%%%%%%%%%%%%%%%%%%%%%%%%%
\section{The  inverse central harmonic oscillator: case $v\not=1$} \label{sec4}
%%%%%%%%%%%%%%%%%%%%%%%%%%%%%%%%%%%%%%%%%%%%

We now consider the case $v\not=1$ for the inverse central harmonic oscillator \eqref{ic}. The  TreadmillSled of the constant-speed ramps are given by \eqref{eqs}, which can be expressed as 
\begin{eqnarray*}
&&\xi_1'=\frac{1}{v^2\mu(\xi_1^2+\xi_2^2)}\left(-\mu v^2\xi_1^2+\mu(1-v^2)\xi_2^2+\xi_1\xi_2\right)\\
&&\xi_2' =\frac{1}{v^2\mu(\xi_1^2+\xi_2^2)}\left(- \xi_1^2-\mu \xi_1\xi_2\right).
\end{eqnarray*}
 Since we are interested in the trajectories of the solutions $(\xi_1,\xi_2)$, we will study the following system that share the same trajectories:
\begin{equation}\label{eqs6}
\begin{split}
&\phi_1'=-\mu v^2\phi_1^2+\mu(1-v^2)\phi_2^2+\phi_1\phi_2\\
&\phi_2' =-\phi_1^2-\mu\phi_1\phi_2.
\end{split}
\end{equation}

The only equilibrium point of the above system   is $(0,0)$, which is a degenerate point.

\begin{figure}[hbtp]
\begin{center}\includegraphics[width=.4\textwidth]{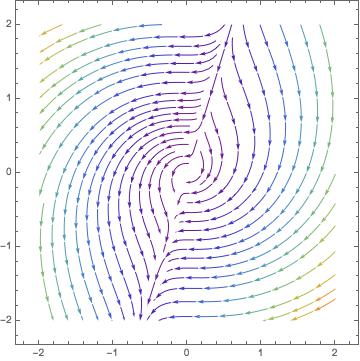}\quad \includegraphics[width=.4\textwidth]{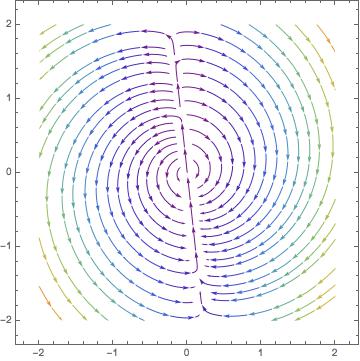}
\end{center}
\caption{The phase portrait of the system \eqref{eqs6}. Left: $v=2$ and $\mu=0.1$. Right: $v=0.5$ and $\mu=0.3$. }\label{f-phase2}
\end{figure}

We begin the study of the quadratic system \eqref{eqs6} by looking for  the solutions   that are straight-lines. If the  solution is    of the form 
$$\gamma(t)=(\phi_1(t),\phi_2(t))=g(t)(a_1,a_2)$$
with $\mathbf{a}=(a_1,a_2)$ a nonzero vector, then \eqref{eqs6} reduces to
\begin{equation}\label{eqs63}
\begin{split}
g'(t)a_1&= g(t)^2(-\mu v^2 a_1^2+\mu(1-v^2)a_2^2+a_1a_2)\\
g'(t)a_2& =g(t)^2(-a_1^2-\mu a_1a_2).
\end{split}
\end{equation}
Since we are only interested in the direction of the semi-line trajectories, we can assume  without loss of generality  that   $g'(t)=g(t)^2$ and 
\begin{eqnarray*}
 a_1&=&  -\mu v^2 a_1^2+\mu(1-v^2)a_2^2+a_1a_2\\
 a_2& =&-a_1^2-\mu a_1a_2.
\end{eqnarray*}
We obtain   $g(t)=1/(\lambda-t)$, $\lambda\in\r$. It   follows that  
$$ \mathbf{a}=\frac{1}{\mu  v^2}\left(-1,  \frac{1}{\mu(1-v^2)}\right).$$
Let us denote
$$r_0= \frac{1}{\mu(1-v^2)}.$$
The ThreadmillSled curve $\gamma$ is 
$$\gamma(t)=\frac{1}{\mu v^2(\lambda-t)}\left(-1, r_0\right).$$
We observe that as $t\rightarrow\infty$, $\gamma(t)$ goes to the origin in the half-line determined by the  direction $-\mathbf{a}$ and $\gamma(t)$ goes away from the origin in the half-line of direction $\mathbf{a}$. See Figure \ref{f-line}. Compare this figure with Figure \ref{f-phase2} where the two straight-lines appear in the phase portrait. We can see how all the non semi-lines trajectories in Figure \ref{f-phase2}  are asymptotic to the semi-lines. We will prove this affirmation once we find a closed formula for all the trajectories.

  \begin{figure}[hbtp]
\begin{center}\includegraphics[width=.4\textwidth]{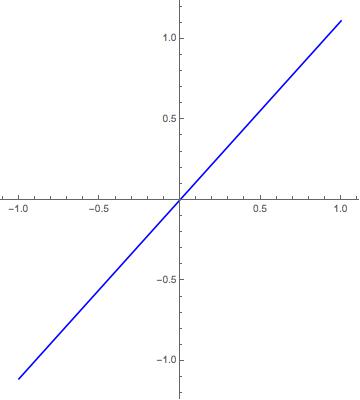}\quad \includegraphics[width=.12\textwidth]{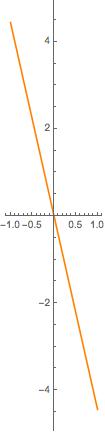}
\end{center}
\caption{  TreadmillSleds that are half-lines.  Left:  $v=2$, $\mu=0.1$. Right: $v=0.5$, $\mu=0.3$. }\label{f-line}
\end{figure}

Since the   inverse process by Proposition \ref{pr-t} is independent of reparametrizations,   let
$$\gamma(t)=(\xi_1(t),\xi_2(t))=\pm\frac{e^t}{\mu v^2}\left(-1,  r_0\right).$$
We now follows the steps given in   Proposition \ref{pr-t}. The function $f(t)$ is  
$$f(t)=-\frac{\xi_2'(t)}{\xi_1(t)}=r_0,$$
where we require the positivity of the function $\xi_2(t)f(t)-\xi_1'(t)$. Now 
$$\xi_2(t)f(t)-\xi_1'(t)=\frac{r_0 e^t}{\mu  v^2 },$$
  which is always positive. The function $G(t)$ is 
  $$G(t)=\int f(t) dt=r_0 t.$$
   By the formula \eqref{tsa},  the expression of $\alpha$ is 
\begin{equation}\label{v2}
\begin{split}
\alpha(t)&=\mp\frac{e^t}{\mu v^2} \left(\begin{array}{ll}\cos(r_0 t )&-\sin(r_0 t )\\ \sin(r_0 t)&\cos(r_0 t)\end{array}\right)\left(\begin{array}{l}-1\\ r_0\end{array}\right)\\
&=\pm\frac{e^t}{\mu v^2} \left(\begin{array}{l} \cos(r_0 t)+r_0\sin(r_0 t)\\
\sin(r_0 t)-r_0\cos(r_0 t)\end{array}
\right)
   \end{split}
  \end{equation}

We can easily see that $\alpha$ is a logarithmic spiral. 
  
  In Figures \ref{f-v2} (resp. Figure \ref{f-v3}) we depict  the constant-speed ramps for the case $v>1$ (resp. $v<1$) for the choices of the vector $ \mathbf{a}$ and $- \mathbf{a}$.
  
  \begin{figure}[hbtp]
\begin{center}\includegraphics[width=.4\textwidth]{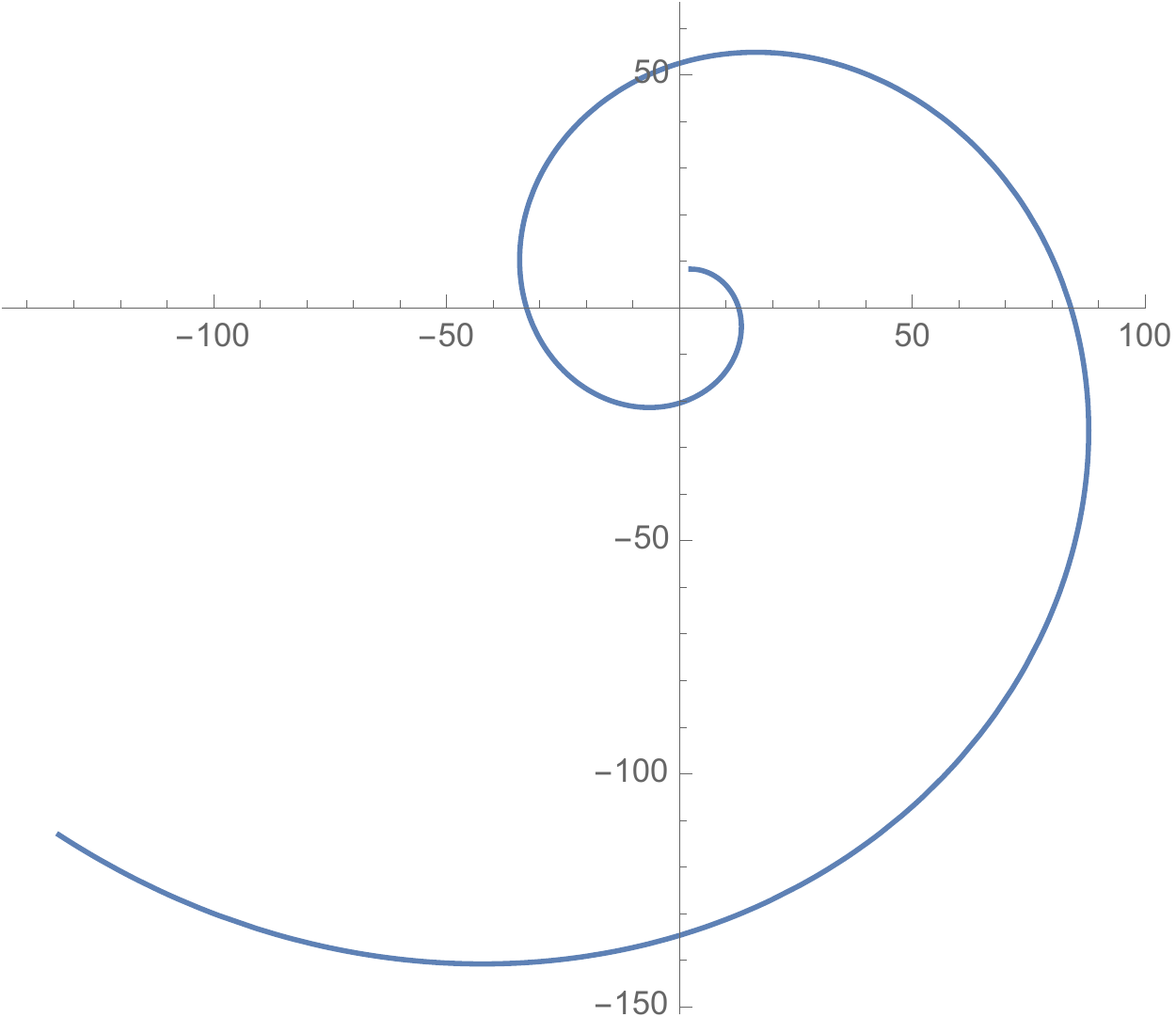}\quad \includegraphics[width=.4\textwidth]{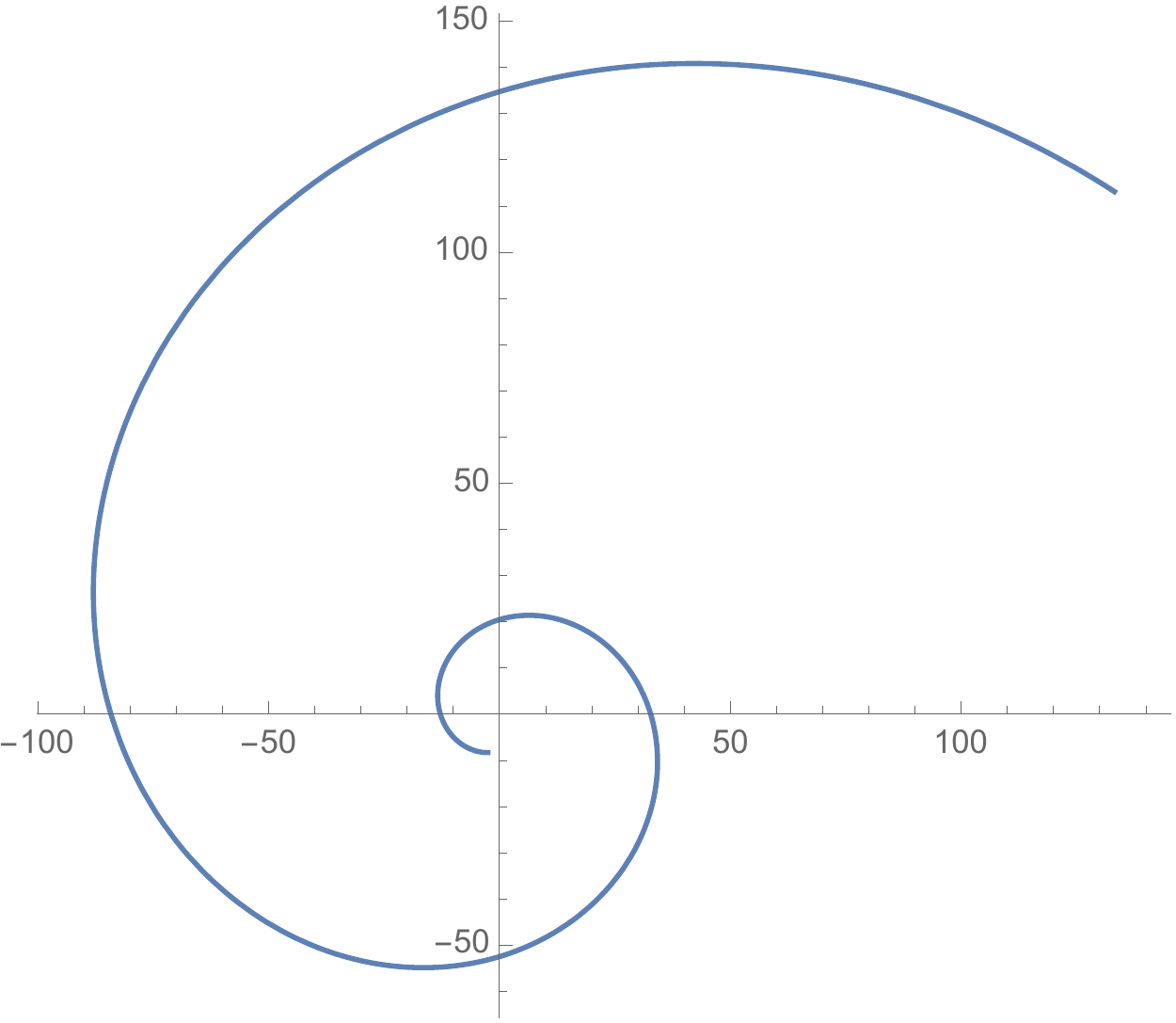}
\end{center}
\caption{Constant-speed ramps whose TreadmillSleds are half-lines of  Figure \ref{f-line}, left.  Here $v=2$, $\mu=0.1$. Left: parametrization \eqref{v2} for $\mathbf{a}$. Right: parametrization \eqref{v2} for $-\mathbf{a}$.}\label{f-v2}
\end{figure}

  \begin{figure}[hbtp]
\begin{center}\includegraphics[width=.4\textwidth]{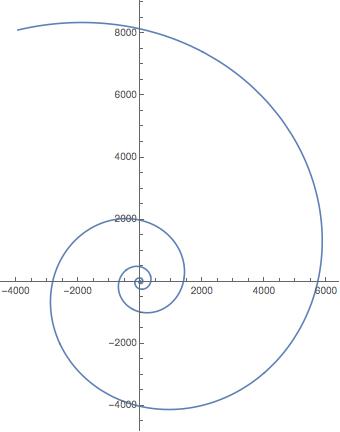}\quad \includegraphics[width=.4\textwidth]{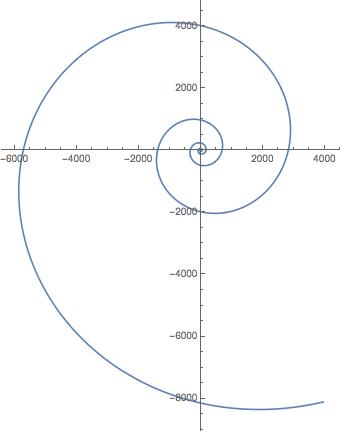}
\end{center}
\caption{Constant-speed ramps $\alpha_{ls}$ whose TreadmillSleds $\gamma_{ls}$ are the half-lines of  Figure \ref{f-line}, right.  Here $v=0.5$ and $\mu=0.3$. Left: parametrization \eqref{v2} for $\mathbf{a}$. Right: parametrization \eqref{v2} for $-\mathbf{a}$.}\label{f-v3}
\end{figure}

\begin{theorem} \label{t-41}
Let $v\not=1$ and $\mu>0$. For  the inverse central harmonic oscillator,  there are two constant-speed ramps $\alpha_{ls}$  that are logarithmic spirals, whose TreadmillSled curves $\gamma_{ls}$ are half-lines through the origin $O$.
\end{theorem}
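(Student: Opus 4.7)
The plan is to look for TreadmillSled curves that are (half-)lines through the origin, since any such ray intersects the $\xi_2$-axis only at $O$ and is therefore a promising candidate for an admissible TreadmillSled via Proposition \ref{pr-t}. Concretely, I will substitute the ansatz $\gamma(t)=g(t)(a_1,a_2)$ into the quadratic system \eqref{eqs6}. Because the left-hand sides are degree-one in $g$ and the right-hand sides degree-two, the variables separate: the time dependence collapses to a single ODE $g'=g^2$ (after rescaling) and the direction $\mathbf{a}$ must satisfy the purely algebraic pair
\begin{align*}
a_1 &= -\mu v^2 a_1^2+\mu(1-v^2)a_2^2+a_1a_2,\\
a_2 &= -a_1^2-\mu a_1a_2.
\end{align*}

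Next, I would solve this algebraic system. From the second equation one can factor $a_1$ and express $a_2$ in terms of $a_1$; plugging the result into the first equation determines $a_1$, yielding the single nonzero solution $\mathbf{a}=\frac{1}{\mu v^2}(-1,r_0)$ with $r_0=1/(\mu(1-v^2))$ (together with its opposite $-\mathbf{a}$, because the line trajectory comes in two half-rays that are both consistent with $g'=g^2$). After reparametrizing by $t\mapsto e^t$, the corresponding TreadmillSled is $\gamma_{ls}(t)=\pm\frac{e^t}{\mu v^2}(-1,r_0)$, which is indeed a half-line through $O$.

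To recover the ramp I apply Proposition \ref{pr-t}. Computing $f(t)=-\xi_2'(t)/\xi_1(t)$ yields the constant $r_0$, so that $G(t)=r_0 t$, and the admissibility condition $\xi_2 f-\xi_1'>0$ reduces to $r_0 e^t/(\mu v^2)>0$, which is automatic (after possibly swapping the sign of $\gamma$ to put it in the correct half-plane; this is the orientation caveat noted after Theorem \ref{t1}). Applying formula \eqref{tsa} then yields the explicit expression \eqref{v2} for $\alpha_{ls}(t)$.

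Finally, I must identify \eqref{v2} as a logarithmic spiral. Writing the second factor in polar form shows that $|\alpha_{ls}(t)|=\frac{e^t}{\mu v^2}\sqrt{1+r_0^2}$ while the argument of $\alpha_{ls}(t)$ is $r_0 t+\theta_0$ for a constant $\theta_0$; eliminating $t$ gives $|\alpha_{ls}|$ proportional to $\exp(\arg(\alpha_{ls})/r_0)$, the defining relation of a logarithmic spiral. The main obstacle I anticipate is the algebraic step of solving the direction equations cleanly and, in parallel, ensuring the orientation/sign condition of Proposition \ref{pr-t} is met on both rays, so that each of the two half-lines $\pm\mathbf{a}$ genuinely yields a ramp $\alpha_{ls}$ rather than being discarded; everything after that amounts to a direct verification.
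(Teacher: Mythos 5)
Your proposal follows essentially the same route as the paper: substitute the ray ansatz $\gamma(t)=g(t)(a_1,a_2)$ into \eqref{eqs6}, separate the time dependence ($g'=g^2$) from the algebraic direction equations to get $\mathbf{a}=\frac{1}{\mu v^2}(-1,r_0)$, reparametrize to $\pm\frac{e^t}{\mu v^2}(-1,r_0)$, and invert via Proposition \ref{pr-t} with $f\equiv r_0$, $G(t)=r_0t$ to obtain \eqref{v2}. Your explicit polar-form verification that \eqref{v2} is a logarithmic spiral, and your attention to the sign of $\xi_2 f-\xi_1'$ on each of the two rays (which the paper glosses over with ``always positive'' even though $r_0<0$ when $v>1$), are slightly more careful than the paper's treatment but do not change the argument.
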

 
In the rest of this section,   we address the problem of finding  the constant-speed ramps whose TreadmillSleds are not half-lines.  We use polar coordinates, so let 
$$\xi_1(t)=r(t)\cos(\varphi(t)),\quad \xi_2(t)= - r(t)\sin(\varphi(t)).$$
Then the differential equation system \eqref{eqs6} is now
\begin{eqnarray*}
&&-r' \cos  \varphi+r \varphi' \sin  \varphi-\frac{1}{2} r^2 \left(\sin (2 \varphi)+\mu  \left(\cos (2 \varphi)+2 v^2-1\right)\right)=0\\
&&r' \sin  \varphi+r \varphi' \cos  \varphi+r^2 \cos  \varphi (\mu  \sin  \varphi-\cos  \varphi)=0.
\end{eqnarray*}
Combining both equations, we deduce 
\begin{eqnarray*}
&&r' + \mu  v^2 r^2 \cos  \varphi=0\\
&&\varphi' - r \left(\cos  \varphi+\mu  \left(v^2-1\right) \sin  \varphi\right)=0.
\end{eqnarray*}
Let $\varphi=\varphi(r)$. Then
$$\varphi'(r)=\frac{\varphi'(t)}{r'(t)}=-\frac{r \left(\cos  \varphi+\mu  \left(v^2-1\right) \sin  \varphi\right)}{\mu  v^2 r^2 \cos  \varphi}=\frac{-1+\mu(1-v^2)\tan\varphi}{\mu  v^2 r},$$
or equivalently,  
$$\frac{\varphi'(r)}{-1+\mu(1-v^2)\tan\varphi}dr=\frac{1}{\mu v^2 }\frac{dr}{r}.$$
Firstly, we find the solution that satisfy that $r(0)=1$ and $\varphi(0)=0$, and later on, we     explain the reason why it is enough to only consider  this solution. With the change $u=\tan\varphi$, we find
\begin{equation}\label{p1}
\int\frac{du}{(1+u^2)(-1+\mu(1-v^2)u)}=\frac{1}{\mu v^2}\log(r).
\end{equation}
The first integral writes as 
\begin{equation}\label{p2}
\frac{1}{ A}\int\frac{B u-1}{u^2+1 }du-\frac{B^2}{A}\int\frac{1}{  B u+1}du
\end{equation}
 where
\begin{equation}\label{ab}
A=1+\mu ^2(v^2-1)^2, \quad B=\mu (v^2-1).
\end{equation}
By integrating \eqref{p2}, Equation \eqref{p1} is now
$$-\frac{\varphi}{A}+\frac{B}{A}\log(\sec\varphi)-\frac{B}{A}\log(1+B\tan\varphi)=\frac{1}{\mu v^2}\log(r)$$
or equivalently,
$$-\frac{\varphi}{A}+\frac{B}{A}\log\left(\frac{\sec\varphi}{1+B\tan\varphi}\right)=\frac{1}{\mu v^2}\log(r)$$

Thus the TreadmillSled $\gamma=\gamma(\varphi)$ of the constant-speed ramp is given in polar coordinates as
\begin{equation}\label{rf}
\begin{split}
r(\varphi)&=\exp\left(\mu v^2\left(-\frac{\varphi}{A}+\frac{B}{A}\log\left(\frac{\sec^2\varphi}{1+B\tan\varphi}\right)\right)\right)\\
&=e^{-\mu v^2\varphi/A}\left(\frac{\sec\varphi}{1+B\tan\varphi}\right)^{\mu v^2 B/A}=e^{-\mu v^2\varphi/A}\left(\frac{1}{\cos\varphi+B\sin\varphi}\right)^{\mu v^2 B/A}
\end{split}
\end{equation}

The domain of this solution is the interval

$$I=(-\arctan(1/B),-\arctan(1/B)+\pi)$$

Before we continue let us point out some important remarks.

\begin{remark} \label{rem1} A direct computation shows that if $(\phi_1(t),\phi_1(t))$ is a solution of the system \eqref{eqs6}, then, for any $\lambda$, $t\longrightarrow \lambda (\phi_1(\lambda t),\phi_2(\lambda t))$ is also a solution.
\end{remark}

\begin{remark} \label{rem2} If $v>1$ then $B>0$ and therefore the expression $\left(\frac{1}{\cos\varphi+B\sin\varphi}\right)^{\mu v^2 B/A}$ converges to infinity when $\varphi$ approaches the boundary values of the interval $I$. Recall that $\cos\varphi+B\sin\varphi$ is always positive on $I$ and it approaches zero at the boundaries of $I$. Therefore $r(\varphi)$ goes to $\infty$ as $\varphi$ goes to the boundary values of $I$. We also point out that the boundary values of $\varphi$ agree with the polar coordinate angles of the two semi-line solutions of the system given by the vector $ \mathbf{a}$ provided by the Equations \eqref{eqs63}. Therefore, if $v>1$, the trajectory given by Equation \eqref{rf} is a graph over the line spanned by $\mathbf{a}$ and it has this line as asymptote. By Remark \ref{rem1} we have that every trajectory of the system \eqref{eqs6} that is not one of the two semi-lines is  a dilation of the solution described in polar coordinate by Equation \eqref{rf}. See Figure \ref{f-phase2}. \end{remark}

\begin{remark} \label{rem3} If $v<1$ then $B<0$ and therefore the expression $\left(\frac{1}{\cos\varphi+B\sin\varphi}\right)^{\mu v^2 B/A}$ converges to zero when $\varphi$ approaches the boundary values of the interval $I$.  Therefore $r(\varphi)$ goes to zero as $\varphi$ goes to the boundary values of $I$. Recall that the boundary values of $\varphi$ agree with the polar coordinate angle of the two semi-line solutions of the system. Therefore, for  $v<1$, if we add the origin to the trajectory given by Equation \eqref{rf} we obtain a closed curve that is topologically a circle. By Remark \ref{rem1}, we have that every trajectory of the system \eqref{eqs6} that is not one of the two semi-lines is  a dilation of the solution described in polar coordinate by Equation \eqref{rf}. Even though the non semi-lines solution are topologically a circle with a point removed, geometrically they look more like a semi-circle connected with a diameter segment with one point removed. See Figure \ref{f-phase2}.

 \end{remark}

We now compute the constant-speed ramp in terms of the variable $\varphi$ by using  Proposition \ref{pr-t}. The function $f(t)$ is 
$$f(\varphi)=-\frac{\xi_2'(\varphi)}{\xi_1(\varphi)}=\frac{1-\mu\tan\varphi}{1+B\tan\varphi}.$$
On the other hand,
$$\xi_2 f-\xi_1'=e^{-\mu v^2\varphi/A}\left(\frac{\sec\varphi}{1+B\tan\varphi}\right)^{\frac{2B^2+\mu B+1}{A}}$$

Clearly this function is positive on the interval $I$ defined above. The function $G(t)$ is 
$$G(\varphi)=\int f(\varphi)d\varphi=\frac{-\mu v^2\log \left(B \sin (\varphi )+\cos (\varphi )\right)+(\mu B-1) \varphi }{1+B^2}.$$
Finally \eqref{tsa} yields
\begin{eqnarray*}
\alpha(\varphi)&=&-r(\varphi)\left(\begin{array}{ll} \cos(G(\varphi))&-\sin(G(\varphi))\\ \sin(G(\varphi))&\cos(G(\varphi))\end{array}\right)\left(\begin{array}{c} \cos\varphi\\ -\sin\varphi\end{array}\right)\\
&=&r(\varphi)\left(\begin{array}{l}
-\cos \left(\dfrac{\mu v^2 \left(B\varphi  -\log (B \sin (\varphi )+\cos (\varphi ))\right)}{B^2+1}\right)\\
\sin \left(\dfrac{\mu v^2 \left(B\varphi  -\log (B \sin (\varphi )+\cos (\varphi ))\right)}{B^2+1}\right)\end{array}\right)
\end{eqnarray*}

We summarize the previous arguments in the following theorem.
 
\begin{theorem} \label{t-42}
Let $v\not=1$ and $\mu>0$. For  the inverse central harmonic oscillator, the constant-speed ramps      whose TreadmillSled curves are not straight-lines are parametrized, up to a rotation and a dilation,  in polar coordinates by
\begin{equation}\label{eq-42}
\alpha(\varphi)=r(\varphi)\left(-\cos \Theta(\varphi),\sin \Theta(\varphi)\right),
\end{equation}
where  the radius function $r(\varphi)$  is defined    in \eqref{rf} and  
$$\Theta(\varphi)=\dfrac{\mu v^2 \left(B\varphi  -\log (B \sin (\varphi )+\cos (\varphi ))\right)}{B^2+1}.$$
\end{theorem}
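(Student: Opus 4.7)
My plan is to solve the autonomous system \eqref{eqs6} governing the TreadmillSled $\gamma$ by switching to polar coordinates and reducing it to a single separable ODE, and then to recover $\alpha$ by applying Proposition \ref{pr-t}. Writing $\xi_1 = r\cos\varphi$ and $\xi_2 = -r\sin\varphi$, the two components of \eqref{eqs6} can be combined (by taking suitable linear combinations using $\cos\varphi$ and $\sin\varphi$) to decouple into
\begin{align*}
r' &= -\mu v^2\, r^2 \cos\varphi,\\
\varphi' &= r\bigl(\cos\varphi + \mu(v^2-1)\sin\varphi\bigr).
\end{align*}
Since Theorem \ref{t-41} already accounts for the half-line trajectories (which are exactly the directions where $\varphi'=0$), I can restrict to trajectories on which $\varphi$ varies monotonically. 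Moreover, by the scaling invariance noted after the statement of Theorem \ref{t-42} (the dilation symmetry $t\mapsto \lambda(\phi_1(\lambda t),\phi_2(\lambda t))$), it suffices to compute a single representative solution; every other non-half-line trajectory is obtained from it by rotation and dilation.

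Next, dividing the equations I get $\varphi'(r) = \dfrac{-1+\mu(1-v^2)\tan\varphi}{\mu v^2\, r}$, which separates. Setting $u=\tan\varphi$, partial fractions decompose $\dfrac{1}{(1+u^2)(-1+Bu)}$ (with $B=\mu(v^2-1)$) into a linear combination of $\dfrac{Bu-1}{u^2+1}$ and $\dfrac{1}{Bu+1}$, divided by $A=1+B^2$. Integrating term by term yields the logarithmic primitive appearing in \eqref{p2}, and exponentiating gives precisely the closed form \eqref{rf} for $r(\varphi)$. The domain $I=(-\arctan(1/B),-\arctan(1/B)+\pi)$ is determined by requiring the factor $\cos\varphi+B\sin\varphi$ to keep a fixed sign, which is needed for the logarithm.

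With $\gamma(\varphi)=(r(\varphi)\cos\varphi,-r(\varphi)\sin\varphi)$ explicit, I then invoke Proposition \ref{pr-t}. I would set $f(\varphi)=-\xi_2'(\varphi)/\xi_1(\varphi)$, simplify using $r'(\varphi)/r(\varphi) = \mu v^2(B\tan\varphi -1)/A$ obtained by logarithmic differentiation of \eqref{rf}, and verify that $f(\varphi)=(1-\mu\tan\varphi)/(1+B\tan\varphi)$. The positivity condition $\xi_2 f - \xi_1'>0$ demanded by Proposition \ref{pr-t} reduces to the positivity of an expression of the form $r(\varphi)^{\text{something}}/(1+B\tan\varphi)^{\text{something}}$, which holds throughout $I$ since $1+B\tan\varphi>0$ there. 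An antiderivative of $f(\varphi)$ is computed by the same partial-fraction technique, producing the function
$$G(\varphi)=\frac{-\mu v^2\log(B\sin\varphi+\cos\varphi)+(\mu B -1)\varphi}{1+B^2},$$
and plugging $\gamma(\varphi)$ and $G(\varphi)$ into the rotation formula \eqref{tsa} gives $\alpha(\varphi)=-R_{G(\varphi)}\gamma(\varphi)$. After collecting the oscillatory and logarithmic terms inside the trigonometric functions, this is exactly \eqref{eq-42} with $\Theta(\varphi)$ as stated.

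The main obstacle is purely computational: keeping the constants $A=1+\mu^2(v^2-1)^2$ and $B=\mu(v^2-1)$ bookkept correctly through the partial-fraction integration and then through the rotation composition. Conceptually nothing new is needed beyond \eqref{eqs6}, the dilation symmetry (to reduce to a single orbit), and the TreadmillSled inversion of Proposition \ref{pr-t}; the only subtle point is the verification that the required positivity hypothesis of Proposition \ref{pr-t} holds on the full interval $I$, which follows from the sign of $\cos\varphi+B\sin\varphi$.
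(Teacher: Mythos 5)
Your proposal is correct and follows essentially the same route as the paper: pass to polar coordinates on the TreadmillSled system \eqref{eqs6}, reduce to the separable ODE for $\varphi(r)$, integrate via $u=\tan\varphi$ and partial fractions to obtain \eqref{rf}, use the dilation symmetry to reduce to one representative orbit, and invert through Proposition \ref{pr-t} with the same $f$, positivity check, and $G$. The only blemish is a transcription slip in the integrand (the factor should be $-1+\mu(1-v^2)u=-(1+Bu)$ rather than $-1+Bu$), but your stated decomposition and all subsequent formulas agree with the paper's.
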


\begin{remark} If we make $v=1$ in the expression \eqref{eq-42}, we obtain  
$$\alpha(\varphi)=-e^{-\mu  \varphi }\left( \cos (\mu  \log (\cos (\varphi ))),  \sin (\mu  \log (\cos (\varphi )))\right), $$

which is one of the ramps described in Equation \eqref{alphaforveq1} of Theorem \ref{tv1}.
\end{remark}

We now present some pictures of constant-speed ramps that are not spirals. The figures will be implemented in Mathematica 12.0 (\cite{wo}). In Figure \ref{f-v4} we show a case with $v>1$, where both, the ThreadmillsSled curve as the constant-speed ramp appear. 

  \begin{figure}[hbtp]
\begin{center}\includegraphics[width=.2\textwidth]{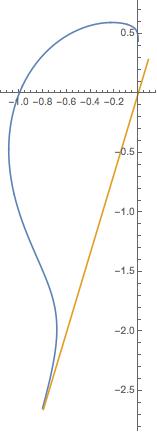}\quad \includegraphics[width=.4\textwidth]{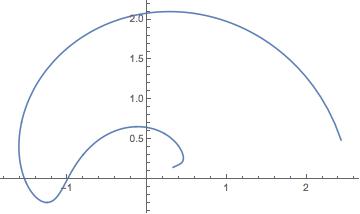}
\end{center}
\caption{Constant-speed ramps whose TreadmillSleds are not half-lines.  Here $v=2$ and $\mu=0.1$. Left:  the ThreadmillSled  which is asymptotic to the  line of vector $\mathbf{a}$.   Right: the constant-speed ramp.}\label{f-v4}
\end{figure}

The constant-speed ramp for the case $v<1$ appears in Figure \ref{f-v05}, together with its  ThreadmillsSled curve. Finally,   Figure \ref{f-bueno} displays both constant-speed ramps.

  \begin{figure}[hbtp]
\begin{center}\includegraphics[width=.2\textwidth]{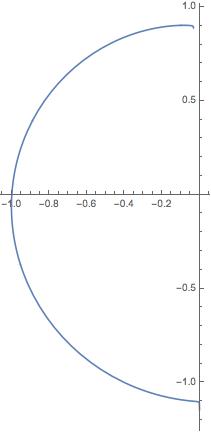}\quad \includegraphics[width=.4\textwidth]{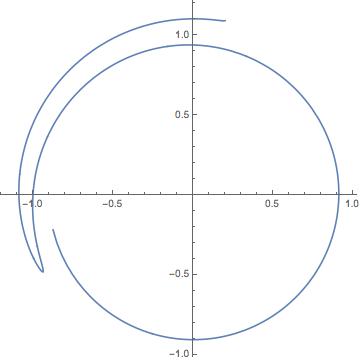}
\end{center}
\caption{Constant-speed ramps whose TreadmillSleds are not half-lines.  Here $v=0.8$ and $\mu=0.3$. Left: the ThreadmillSled.   Right: the constant-speed ramp.}\label{f-v05}
\end{figure}

  \begin{figure}[hbtp]
\begin{center}\includegraphics[width=.3\textwidth]{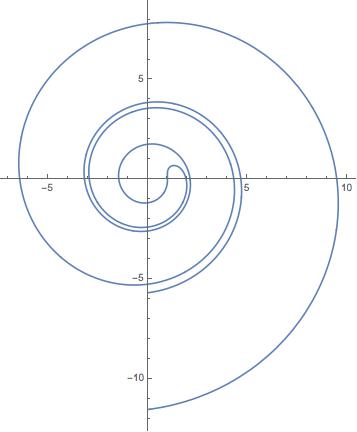}\quad \includegraphics[width=.4\textwidth]{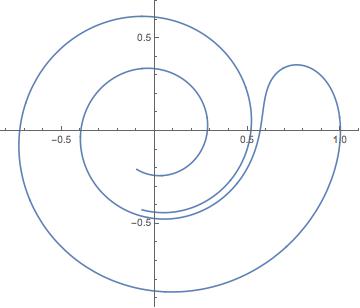}
\end{center}
\caption{Constant-speed ramps that are not spirals. Left: $v=2$ and $\mu=0.1$. Right: $v=0.8$ and $\mu=0.3$. }\label{f-bueno}
\end{figure}

From the analysis of the phase portrait in Figure \ref{f-phase2}, we conclude the following qualitative properties.

\begin{corollary}\label{c-43}
If $\alpha$ is a constant-speed ramp described in Equation \eqref{eq-42}, then  $\alpha$   converges to one of the spirals of Theorem \ref{t-41} with the same value of $v$ and $\mu$. Moreover, if   $v>1$ (resp. $v<1$),  $\alpha$ is not bounded (resp. bounded) curve.
\end{corollary}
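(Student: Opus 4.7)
My plan is to read both conclusions off the explicit polar formulas \eqref{rf} and \eqref{eq-42}, using the identity $|\alpha(\varphi)| = r(\varphi)$. This identity is immediate from \eqref{pg}: since $\alpha$ is arc-length parametrized, one computes $|\gamma|^{2} = (xx'+yy')^{2} + (xy'-yx')^{2} = (x^{2}+y^{2})(x'^{2}+y'^{2}) = |\alpha|^{2}$, so the TreadmillSled preserves the distance to the origin.

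For the boundedness claim I would simply invoke Remarks \ref{rem2} and \ref{rem3}. If $v>1$ then $B>0$, the factor $\cos\varphi+B\sin\varphi$ tends to $0^{+}$ at either endpoint of $I$, and the exponent $-\mu v^{2}B/A$ in \eqref{rf} is negative, so $r(\varphi)\to\infty$ and $\alpha$ is unbounded. If $v<1$ then $B<0$, the same factor still tends to $0^{+}$ but now with a positive exponent, giving $r(\varphi)\to 0$ at both endpoints; a continuous function on an open interval with finite limits at both ends is bounded, so $|\alpha|=r$ is bounded.

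For the convergence to $\alpha_{ls}$ I would switch to polar coordinates of $\alpha$ itself. Writing $\alpha(\varphi) = \rho(\varphi)(\cos\theta(\varphi), \sin\theta(\varphi))$ gives $\rho = r(\varphi)$ and $\theta = \pi-\Theta(\varphi)$. The auxiliary quantity $u = \log(\cos\varphi + B\sin\varphi)$ is the only term in \eqref{rf} and in $\Theta$ that blows up as $\varphi \to \partial I$; using $A = 1+B^{2}$, the $u$-contributions cancel in the combination $\log\rho + B\theta$, producing the clean relation
\begin{equation*}
\log\rho(\varphi) + B\,\theta(\varphi) = B\pi - \mu v^{2}\varphi.
\end{equation*}
The right-hand side is bounded on $I$ and has a finite limit at each endpoint. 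Since $B = -1/r_{0}$, this is precisely the logarithmic spiral equation $\rho = C\,e^{\theta/r_{0}}$ up to a bounded correction, with the same exponent $1/r_{0}$ as the spiral $\alpha_{ls}$ obtained by writing \eqref{v2} in polar form. The two endpoints of $I$ correspond to the two asymptotic half-line directions $\pm\mathbf{a}$ identified in Remarks \ref{rem2}--\ref{rem3}, and these select which of the two spirals of Theorem \ref{t-41} is being approached.

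The main obstacle I anticipate is promoting this polar asymptotic identity to an intrinsic statement of convergence of curves. I would resolve it via the self-similarity of logarithmic spirals: two spirals with the same exponent $1/r_{0}$ differ only by a rotation about $O$, and by the rotational invariance of the system \eqref{eqs6} any such rotated spiral is itself a ramp of the same type. Hence the limiting value of $B\pi - \mu v^{2}\varphi$ at an endpoint of $I$ merely prescribes the rotation that aligns $\alpha$ with the appropriate $\alpha_{ls}$, and uniform convergence of $\log\rho + B\theta$ near each endpoint upgrades the pointwise identity to Hausdorff convergence of the relevant arc of $\alpha$ to $\alpha_{ls}$.
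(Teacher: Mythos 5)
Your proposal is correct, and for the convergence claim it takes a genuinely different and more explicit route than the paper. The paper's proof is soft: it cites Remarks \ref{rem2} and \ref{rem3} to say that the TreadmillSled $\gamma$ is asymptotic to one of the half-lines $\gamma_{ls}$, and then simply asserts that convergence of TreadmillSleds implies convergence of the underlying curves, leaning on the phase portraits and figures; the step from $\gamma\to\gamma_{ls}$ to $\alpha\to\alpha_{ls}$ is never justified analytically. You instead work directly with the closed formulas of Theorem \ref{t-42} and derive the exact relation $\log\rho+B\theta=B\pi-\mu v^2\varphi$ (which I checked: the $\log(\cos\varphi+B\sin\varphi)$ terms do cancel precisely because $A=1+B^2$), exhibiting $\alpha$ as a logarithmic spiral of exponent $1/r_0$ modulo a correction that is bounded on $I$ and converges at each endpoint. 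This identifies the limit spiral's exponent with that of $\alpha_{ls}$ from \eqref{v2} and reduces the remaining ambiguity to a rotation, which you correctly absorb using the rotational invariance of the problem. Your argument therefore supplies the analytic content the paper leaves implicit, at the cost of a longer computation. For the boundedness claim both arguments are essentially the same: you both read it off from $|\alpha|=|\gamma|=r(\varphi)$ together with the endpoint behavior of $r$ in Remarks \ref{rem2} and \ref{rem3} (the identity $|\alpha|^2=\xi_1^2+\xi_2^2$ is already recorded in Section \ref{sec3}, so your rederivation of it is consistent with the paper).
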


\begin{proof} By Remarks \ref{rem2} and \ref{rem3}, we know that the TreadmillSled $\gamma$ of $\alpha$ converges asymptotically to one of the two half-lines $\gamma_{ls}$ of Theorem \ref{t-41}, which are TreadmillSled of logarithmic spirals $\alpha_{ls}$. Thus $\alpha$ converges to one of these spirals $\alpha_{ls}$. See Figure \ref{f-bueno}, left.

On the other hand, if $v>1$,  the TreadmillSleds curve $\gamma$ is unbounded, and hence, $\alpha$ are also unbounded. In case $v<1$, we see that the trajectories of the TreadmillSled in Figure \ref{f-phase2} are bounded and are topologically circles with one point removed. Near to the origin, they are tangent to the two half-lines $\gamma_{ls}$. Thus the constant-speed ramps $\alpha$ are also bounded and converge to the spirals $\alpha_{ls}$. See Figure \ref{f-bueno}, right.
\end{proof}

\bibliographystyle{amsplain}
 %%%%%%%%%%%%%%%%%%%%%%

\end{document}